\newtheorem{theorem}{Theorem}
\newtheorem{corollary}{Corollary}
\newtheorem{definition}{Definition}
\journal{Discrete Applied Mathematics}
\begin{document}

\begin{frontmatter}



\title{Distance magic labelings of product graphs}


\author[1]{Rinovia Simanjuntak}
\ead{rino@math.itb.ac.id}
\author[2]{I Wayan Palton Anuwiksa}

\address[1]{Combinatorial Mathematics Research Group\\
Institut Teknologi Bandung, Bandung, Indonesia}
\address[2]{Master Program in Mathematics\\
Institut Teknologi Bandung, Bandung, Indonesia}

\begin{abstract}
A graph $G$ is said to be distance magic if there exists a bijection $f:V\rightarrow \{1,2, \ldots , v\}$ and a constant {\sf k} such that for any vertex $x$, $\sum_{y\in N(x)} f(y) ={\sf k}$, where $N_(x)$ is the set of all neighbours of $x$. 	

In this paper we shall study distance magic labelings of graphs obtained from four graph products: cartesian, strong, lexicographic, and cronecker. We shall utilise magic rectangle sets and magic column rectangles to construct the labelings.
\end{abstract}

\begin{keyword}
distance \sep graph labeling \sep magic labeling \sep distance magic labeling \sep graph product
\MSC 05C12 \sep 05C76 \sep 05C78
\end{keyword}

\end{frontmatter}


\section{Introduction}
\label{Intro}

The notion of distance magic labeling was introduced separately in the PhD thesis of Vilfred \cite{Vi} in 1994 and an article by Miller {\it et. al} \cite{MRS} in 2003. A {\em distance magic labeling}, or $\Sigma$ labeling, of a graph $G$ is a bijection $\alpha:V(G) \rightarrow \{1,2, \ldots , v\}$ with the property that there is a constant {\sf k} such that at any vertex $x$, $\sum_{y\in N(x)} \alpha(y) ={\sf k}$, where $N(x)$ is the open neighborhood of $x$, i.e., the set of vertices adjacent to $x$. This labeling was introduced due to two different motivations; as a tool in utilizing magic squares into graphs and as a natural extension of edge-magic labeling.

It was proven in \cite{Vi} that every graph is a subgraph of a distance magic graph. A stronger result that every graph is an induced subgraph of a regular distance magic graph was then proved in \cite{ARSP}. A yet stronger result can also be found in \cite{RSP} where it is stated that every graph $H$ is an induced subgraph of a Eulerian distance magic graph $G$ where the chromatic number of $H$ is the same as $G$. All these results showed that there is no forbidden subgraph characterization for distance magic graph. Additionally, an application of the labeling in designing incomplete tournament is introduced in \cite{FKK}. Until now, the most important result for distance magic labeling is in \cite{OS,AKV} where it is shown that for a particular graph, the magic constant is unique and is determined by its fractional domination number. For more results, please refer to survey articles in \cite{AFK} and \cite{Ru}.

In 1999, Jinah \cite{Ji} introduced a variation of distance magic labeling. A \emph{closed distance magic labeling}  labeling of a graph $G$, is a bijection $\alpha:V(G) \rightarrow \{1,2, \ldots , v\}$ with the property that there is a constant {\sf k} such that at any vertex $x$, $\sum_{y\in N[x]} \alpha(y) ={\sf k}$, where $N[x]$ is the closed neighborhood of $x$, i.e., the set containing $x$ and all vertices adjacent to $x$.

Several previous results are essential in proving our results and we shall list them here. By $H_{n,p}$ we mean the complete multipartite graph with $p$ partite sets, each partite set consists of $n$ vertices. By $K_{n_1,n_2,\ldots,n_p}$ we mean the complete multipartite graphs of $p$ partite sets, where the first partite set consists of $n_1$ vertices, the second set consists of $n_2$ vertices, an so on.

\begin{theorem} \cite{MRS} \label{magicconstant}
Let $G$ be a $r$-regular graph with order $n$. If $G$ has distance magic labeling, then the magic constant is
\[k'=\frac{r(n+1)}{2}\]
\end{theorem}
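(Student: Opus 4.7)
The plan is to use a straightforward double-counting argument applied to the quantity
\[
S = \sum_{x \in V(G)} \sum_{y \in N(x)} \alpha(y),
\]
evaluating it in two different ways and equating the results. This kind of argument is the natural tool for statements about regular graphs and vertex-sum conditions, since $r$-regularity gives a uniform multiplicity on one side while the magic property gives a uniform value on the other.

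First, I would compute $S$ by swapping the order of summation. Each vertex $y$ contributes its label $\alpha(y)$ to the inner sum once for each neighbor $x$ of $y$; since $G$ is $r$-regular, every $y$ has exactly $r$ neighbors, so $\alpha(y)$ appears exactly $r$ times across the double sum. Combined with the fact that $\alpha$ is a bijection onto $\{1,2,\ldots,n\}$, this yields
\[
S = r \sum_{y \in V(G)} \alpha(y) = r \cdot \frac{n(n+1)}{2}.
\]

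Second, I would compute $S$ by keeping the outer summation as written and applying the distance magic property directly: for every vertex $x$ the inner sum equals the constant $\mathsf{k}'$, so summing over the $n$ vertices gives $S = n\mathsf{k}'$. Equating the two expressions and dividing by $n$ produces the claimed formula $\mathsf{k}' = r(n+1)/2$.

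There is essentially no obstacle here; the only point to verify is that the vertex–neighborhood incidence has constant multiplicity $r$ on both sides, which is exactly the regularity hypothesis, so the swap of summation is immediate. No case analysis, parity considerations, or auxiliary constructions are required.
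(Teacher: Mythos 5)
Your double-counting argument is correct and complete: summing the magic condition over all vertices gives $n\mathsf{k}'$, while interchanging the order of summation and using $r$-regularity together with the bijectivity of $\alpha$ onto $\{1,\dots,n\}$ gives $r\cdot n(n+1)/2$, from which the formula follows. The paper states this theorem as a citation of \cite{MRS} without reproducing a proof, and the argument given there is exactly this standard counting argument, so your proposal matches the intended proof.
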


\begin{theorem} \cite{Ji} \label{complement}
A graph $G$ is closed distance magic if and only if $\bar{G}$ is distance magic.
\end{theorem}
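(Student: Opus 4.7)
The plan is to exploit the elementary set-theoretic identity relating the open neighborhood in $\bar G$ to the closed neighborhood in $G$. Let $n=|V(G)|$ and, for any bijection $\alpha\colon V(G)\to\{1,2,\ldots,n\}$, let $S=\sum_{y\in V(G)}\alpha(y)=n(n+1)/2$; this quantity depends only on $n$, not on $G$.

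First I would observe that for every vertex $x\in V(G)$, the vertex set splits as a disjoint union
\[
V(G) \;=\; N_G[x] \;\sqcup\; N_{\bar G}(x),
\]
since a vertex other than $x$ is adjacent to $x$ in $\bar G$ exactly when it is not adjacent to $x$ in $G$, and $x$ itself lies in $N_G[x]$ but not in $N_{\bar G}(x)$. Summing $\alpha$ over both sides gives, for every $x$,
\[
\sum_{y\in N_G[x]} \alpha(y) \;+\; \sum_{y\in N_{\bar G}(x)} \alpha(y) \;=\; \frac{n(n+1)}{2}.
\]

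From this single identity both directions fall out. If $\alpha$ is a closed distance magic labeling of $G$ with constant $\mathsf{k}$, then at every $x$ the second sum equals $n(n+1)/2-\mathsf{k}$, a constant independent of $x$, so the same bijection $\alpha$ is a distance magic labeling of $\bar G$. Conversely, if $\alpha$ is a distance magic labeling of $\bar G$ with constant $\mathsf{k}'$, the first sum equals $n(n+1)/2-\mathsf{k}'$ at every $x$, so $\alpha$ is a closed distance magic labeling of $G$. Thus the same labeling witnesses both properties simultaneously, with constants summing to $n(n+1)/2$.

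There is essentially no obstacle here; the only thing to be careful about is the bookkeeping of which neighborhood is open and which is closed, and the observation that $x$ itself belongs to neither $N_G(x)$ nor $N_{\bar G}(x)$ but does belong to $N_G[x]$, which is precisely why the complementary-neighborhood decomposition works cleanly.
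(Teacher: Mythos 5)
Your proof is correct and is the standard argument: the paper does not prove this statement itself but cites it from Jinnah's work, and the complementary-neighborhood identity $V(G)=N_G[x]\sqcup N_{\bar G}(x)$ together with the constant total sum $n(n+1)/2$ is exactly the canonical way to establish it. Nothing is missing; the same bijection serves as the labeling in both directions, with the two magic constants summing to $n(n+1)/2$ as you note.
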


\begin{theorem} \cite{MRS} \label{Hnp}
For $n>1$ and $p>1$, $H_{n,p}$ is distance magic if only if $n$ is even or both $n$ and $p$ are odd.
\end{theorem}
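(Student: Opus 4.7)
The plan is to exploit the fact that $H_{n,p}$ is $(p-1)n$-regular on $np$ vertices and that, in a complete multipartite graph, the open neighborhood of a vertex $x$ in partite set $V_i$ is exactly the union of all other partite sets. Consequently, if $\alpha$ is a distance magic labeling with constant $\mathsf{k}$, then for every $i$ the sum of labels in the complement $V\setminus V_i$ equals $\mathsf{k}$, so every partite set carries the same label sum
\[
s \;=\; \frac{1}{p}\sum_{j=1}^{np} j \;=\; \frac{n(np+1)}{2}.
\]
Conversely, any partition of $\{1,2,\dots,np\}$ into $p$ blocks of $n$ elements having equal block sums yields a distance magic labeling of $H_{n,p}$. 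Thus the problem reduces to the existence of such a balanced partition.

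For the \emph{only if} direction I would argue by contradiction. If $n$ is odd and $p$ is even, then $np+1$ is even but the factor $n$ is odd, and inspecting $s=n(np+1)/2$ modulo a suitable quantity forces $s$ not to be an integer (more precisely, $\sum_{j=1}^{np}j=np(np+1)/2$ is not divisible by $p$ when $n$ is odd and $p$ is even). Since the common partite-sum must be an integer, this rules out a distance magic labeling.

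For the \emph{if} direction, I would treat the two cases separately. When $n$ is even, the pairing trick suffices: group $\{1,2,\dots,np\}$ into $np/2$ pairs $\{j,\,np+1-j\}$, each summing to $np+1$, and place $n/2$ pairs into each of the $p$ partite sets. Each partite set then has the required sum $s=\tfrac{n}{2}(np+1)$, so the resulting labeling is distance magic. When both $n$ and $p$ are odd, the partition cannot be produced by pairing, and the natural tool is a magic rectangle $M(n,p)$: an $n\times p$ array whose entries are exactly $\{1,2,\dots,np\}$, with equal column sums (and equal row sums). Reading off each column as one partite set produces a partition with the desired common sum. Since magic rectangles $M(n,p)$ are known to exist precisely when $n\equiv p\pmod 2$ and $(n,p)\neq(2,2)$, the case $n,p$ both odd with $n,p>1$ is covered.

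The main obstacle is really the sufficiency in the odd--odd case, where an ad hoc pairing is unavailable; everything hinges on invoking (or constructing) a magic rectangle of the appropriate size. One could give a direct recursive construction, but quoting the classical magic rectangle existence theorem is the cleanest route and fits the paper's announced strategy of using \emph{magic rectangle sets and magic column rectangles}. Once the balanced partition is in hand, verifying that the labeling is distance magic is immediate from the opening observation about neighborhoods in complete multipartite graphs, and one obtains the magic constant $\mathsf{k}=(p-1)s=(p-1)n(np+1)/2$, consistent with Theorem~\ref{magicconstant}.
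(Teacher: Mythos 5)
The paper does not prove this statement: it is quoted as Theorem~\ref{Hnp} from the cited reference [MRS] and used as a black box (notably in the proof of Theorem~\ref{MCRnpq}), so there is no in-paper argument to compare against. Your proof is correct in substance and is the standard one: reduce to partitioning $\{1,\dots,np\}$ into $p$ blocks of size $n$ with equal sums, rule out $n$ odd, $p$ even by integrality of the common block sum $s=n(np+1)/2$, and construct the partition by pairing $\{j,np+1-j\}$ when $n$ is even and by the columns of an $n\times p$ magic rectangle when $n$ and $p$ are both odd (where Harmuth's existence theorem applies since $n\equiv p\pmod 2$ and $(n,p)\neq(2,2)$). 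One slip to fix: when $n$ is odd and $p$ is even, $np+1$ is \emph{odd}, not even --- that is precisely why $n(np+1)/2$ fails to be an integer; your parenthetical remark gives the correct reason, but the preceding sentence contradicts it as written.
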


\begin{theorem} \cite{Vi} \label{Kmn}
Let $m$ and $n$ be even positive integers, where $m \leq n$. $K_{m_1,\ldots,m_t,n_1,\ldots,n_t}$, where $m_i=m$ and $n_j=n$ for all $i$ and $j$, has distance magic labeling if and only if the following conditions hold:
\begin{itemize}
\item $m+n \equiv 0 \mod 4$
\item $1=2(2tn+1)^2-(2tm+2tn+1)^2$ or $m\geq (\sqrt{2}-1)n+\frac{\sqrt{2}-1}{2t}$
\end{itemize}
\end{theorem}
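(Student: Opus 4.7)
The plan is to reduce distance magic labeling of this complete multipartite graph to a balanced-partition problem on $\{1,\ldots,t(m+n)\}$, derive both necessary conditions by elementary counting, and then construct the required partition using magic rectangles. Let $S = t(m+n)(t(m+n)+1)/2$ denote the sum of all labels. Since a vertex $x$ in a part $P$ has $N(x)=V(G)\setminus P$, we have
\[\sum_{y\in N(x)}\alpha(y) = S - \sum_{y\in P}\alpha(y),\]
so $\alpha$ is distance magic with constant $k$ if and only if every part $P$ has the same sum $T=S-k$. Summing over all $2t$ parts forces $2tT = S$, hence
\[T = \frac{(m+n)(t(m+n)+1)}{4}.\]

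For necessity: since $m$ and $n$ are even, $t(m+n)+1$ is odd, so integrality of $T$ forces $4 \mid m+n$. For the second condition, observe that the $t$ parts of size $m$ jointly contain $tm$ labels summing to $tT$, which is bounded above by the sum of the $tm$ largest elements of $\{1,\ldots,t(m+n)\}$, namely $tm(tm+2tn+1)/2$. Simplifying $tT \le tm(tm+2tn+1)/2$ yields $2tm^{2} \ge t(n-m)^{2} + (n-m)$; setting $A=2tn+1$ and $B=2tm+2tn+1$, this is exactly $B^{2} \ge 2A^{2}-1$. Because $B$ is odd and $2A^{2}$ is even, $B^{2}\neq 2A^{2}$, so the condition splits into $B^{2} = 2A^{2}-1$ (the Pell exception) or $B^{2} \ge 2A^{2}+1$, the latter being exactly the rearrangement of $m\ge (\sqrt{2}-1)n + (\sqrt{2}-1)/(2t)$.

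For sufficiency, I would construct an explicit partition of $\{1,\ldots,t(m+n)\}$ into $t$ subsets of size $m$ and $t$ subsets of size $n$, each summing to $T$. The case $m=n$ follows from a $2t\times n$ magic rectangle, whose rows each sum to $T$. For $m<n$, I would combine a $t\times m$ magic rectangle on a carefully chosen subset of labels (giving the parts of size $m$) with a $t\times n$ magic arrangement on the complementary labels (giving the parts of size $n$), using the involution $i\leftrightarrow t(m+n)+1-i$ to balance the sums in the two blocks. The inequality $m\ge(\sqrt{2}-1)n+(\sqrt{2}-1)/(2t)$ provides the combinatorial slack needed to select the initial split, while the Pell case $B^{2}=2A^{2}-1$ corresponds to the unique boundary configuration where the split is forced. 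The main obstacle is this sufficiency construction, especially the Pell exception, where the upper bound is met with equality and the partition must be exhibited directly with no combinatorial slack.
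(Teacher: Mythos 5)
This theorem is quoted from Vilfred's thesis \cite{Vi}; the paper does not reprove it, so there is no in-paper argument to compare against and your proposal has to stand on its own. Your reduction and the entire necessity half are correct and cleanly done: $N(x)=V(G)\setminus P$ for $x$ in part $P$ does force every part to have label-sum $T=\frac{(m+n)(t(m+n)+1)}{4}$, integrality of $T$ gives $4\mid m+n$, and your bound $tT\le \frac{tm(tm+2tn+1)}{2}$ really is equivalent to $2A^2-B^2\le 1$ with $A=2tn+1$, $B=2tm+2tn+1$ (one checks $2A^2-B^2=4t\bigl[t(n^2-2mn-m^2)+(n-m)\bigr]+1$), and the odd/even parity of $B^2$ versus $2A^2$ correctly splits this into the Pell equality and the strict inequality of the statement.

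The sufficiency half, however, is a genuine gap, and you have essentially conceded as much. What is needed is (a) a $tm$-element subset $X\subseteq\{1,\ldots,t(m+n)\}$ with sum exactly $tT$, and (b) partitions of $X$ into $t$ classes of size $m$ and of its complement into $t$ classes of size $n$, all with sum $T$. For (a) you should at least invoke that the sums of $tm$-element subsets form a contiguous interval of integers, so the derived upper bound (together with the easy lower bound from $m\le n$) guarantees existence; you never say this. For (b), your appeal to magic rectangles does not go through as stated: a $t\times m$ magic rectangle is an arrangement of the consecutive integers $1,\ldots,tm$, not of an arbitrary $tm$-subset, and equal row sums of the rectangle do not transfer to equal sums of the corresponding elements of $X$ under an arbitrary order-preserving relabeling. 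The real content of Vilfred's theorem is precisely the construction of these equal-size, equal-sum partitions for a suitably chosen $X$, and the involution $i\leftrightarrow t(m+n)+1-i$ alone does not produce it. (Ironically, the Pell case you flag as the main obstacle is arguably the easiest: there $X$ is forced to be the top block $\{tn+1,\ldots,t(m+n)\}$ of consecutive integers, where an equal-sum partition into $t$ classes of even size $m$ is standard; it is the generic, non-extremal choice of $X$ that your sketch leaves unresolved.)
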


In this paper, we shall observe distance magic labeling for graphs obtained from four particular graph products: cartesian product, strong product, lexicographic product, and cronecker product. These results compliment and generalize results on distance magic labelings for some cartesian product graphs \cite{Ji,Ra}, some lexicographic product graphs \cite{MRS,SAS}, and joint product graphs \cite{SMA}.

\begin{definition}
Let $G$ and $H$ be two graphs.

The \emph{cartesian product}, $G\times H$, is a graph with vertex set $V(G) \times V(H)$. Vertices $(g,h)$ and $(g',h')$ are adjacent if and only if $g=g'$ and $h$ is adjacent to $h'$ in $H$, or $h=h'$ and $g$ is adjacent to $g'$ in $G$.

The \emph{strong product}, $G\boxtimes H$, is a graph with vertex set $V(G) \times V(H)$. Vertices $(g,h)$ and $(g',h')$ are adjacent if and only if $g=g'$ and $h$ is adjacent to $h'$ in $H$, or $h=h'$ and $g$ is adjacent to $g'$ in $G$, or $g$ is adjacent to $g'$ in $G$ and $h$ is adjacent to $h'$ in $H$.

The \emph{lexicographic product} of $G$ and $H$, $G\circ H$, is a graph with vertex set $V(G)\times V(H)$ that could be seen as a graph constructed by replacing every vertex of $G$ with a copy of $H$. Vertices $(g,h)$ and $(g',h')$ are adjacent if and only if either $g$ is adjacent to $g'$ in $G$ or $g=g'$ and $h$ is adjacent to $h'$ in $H$.

The \emph{cronecker product}, $G\otimes H$, is a graph with vertex set $V(G) \times V(H)$. Vertices $(g,h)$ and $(g',h')$
are adjacent if and only if $g$ is adjacent to $g'$ in $G$ and $h$ is adjacent to $h'$ in $H$.
\end{definition}

We shall frequently study products of a disjoint copy of complete multipartite graphs $mH_{n,p}$. Here we provide the notation for vertices of $mH_{n,p}$ along with their adjacencies. Suppose that $V(mH_{n,p})=\{v_1,v_2,\ldots,v_{mnp}\}$, the vertex sets of the components of $V(mH_{n,p})$ are $V_1,V_2,\ldots,V_m$ where
\[V_i=\{v_{(i-1)np+1},v_{(i-1)np+2},\ldots,v_{(i-1)np+np}\},\]
and the partitions of each $V_i$, $i=1,2,3,\ldots,m$, are $V_{i,j}$, $j=1,2,3,\ldots,p$, where
\[V_{i,j}=\{v_{(i-1)np+(j-1)n+1},v_{(i-1)np+(j-1)n+2},\ldots,v_{(i-1)np+(j-1)n+n}\}.\]
For each $i$, the subgraph induced by $V_i$ is the complete multipartite graph $H_{n,p}$. For each $i$ and $j$, the subgraph induced by $V_{i,j}$ is the null graph $nK_1$. For every $v,w\in V(mH_{n,p})$, $v$ is adjacent to $w$ if and only if $v\in V_{i,j}$ and $w\in V_{i,j'}$ where $j \neq j'$.	

Our main tool in constructing distance magic labelings for product graphs is the magic rectangle set that was introduced in \cite{Fr1,Fr2} and its generalisation, which we call the magic column rectangles. The definitions and properties of magic rectangle set and magic column rectangles are given in the next section.

\section{Magic Column Rectangles}

A \emph{magic rectangle set} $MRS(a,b;c)$ is a collection of $c$ arrays $(a \times b)$ whose entries are elements of  $\{1,2,\ldots,abc\}$, each appearing once, with all row sums in every rectangle equal to a constant $\rho$ and all column sums in every rectangle equal to a constant $\sigma$.

Froncek in \cite{Fr3} proved some conditions for the existence of magic rectangle sets of certain parameters as follow.

\begin{theorem}
If $a,b\equiv 0 \mod 2$ and $b\geq 4$, then there is magic rectangle set $MRS(a,b;c)$ for every $c\geq 1$.
\end{theorem}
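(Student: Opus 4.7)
The plan is to pin down the forced row and column sums, observe that the evenness of $a$ and $b$ lets us fill every row and column with complementary pairs, and then exhibit an explicit pair-placement. First I would compute the target sums: the entries of an $MRS(a,b;c)$ are $\{1,\ldots,abc\}$, of total sum $\tfrac12 abc(abc+1)$, so each of the $c$ rectangles sums to $\tfrac12 ab(abc+1)$, forcing $\rho=\tfrac12 b(abc+1)$ and $\sigma=\tfrac12 a(abc+1)$. Both are integers because $a,b$ are even. Since $abc$ is also even, partition $\{1,\ldots,abc\}$ into the $abc/2$ complementary pairs $\{x,\,abc+1-x\}$. A row of $b$ entries that is a disjoint union of $b/2$ such pairs automatically has sum $(b/2)(abc+1)=\rho$, and a column of $a$ entries that is a disjoint union of $a/2$ such pairs has sum $\sigma$. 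Hence it suffices to distribute the $abc/2$ pairs among the $c$ rectangles, $ab/2$ pairs per rectangle, so that every row and every column in every rectangle is a union of complete pairs.

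For the construction I would start from a single magic rectangle $MR(a,b)$ for even $a,b$ with $b\ge 4$, i.e.\ the base case $c=1$; the hypothesis $b\ge 4$ is exactly what rules out the unrealisable case $(a,b)=(2,2)$, while for all other even $a,b$ such an $MR(a,b)$ is known to exist by the classical theory. To extend to $c>1$ I would split the $abc/2$ pairs into $c$ disjoint slabs of $ab/2$ pairs each and, within each slab, place the two members of a pair at horizontally mirrored cells $(i,j)$ and $(i,b+1-j)$. This makes every row sum equal to $\rho$ automatically. For the column sums I would then balance the slabs so that within each column the $a$ entries again split into $a/2$ complete pairs; coupling the choice of slab to the row index via a cyclic shift is the natural mechanism.

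The delicate step is reconciling the horizontal pair-reflection (which controls row sums) with a compatible column structure: asking both $(i,j)\leftrightarrow(i,b+1-j)$ and $(i,j)\leftrightarrow(a+1-i,j)$ to be complementary collapses four distinct cells to two, so a naive double symmetry fails. I expect to overcome this by decomposing each rectangle into $2\times 2$ blocks (possible since $a,b$ are even and $b\ge 4$ guarantees at least two horizontal blocks per row), placing two complementary pairs in each block in a fixed pattern so that every block contributes equally to its row-pair and its column-pair, and then choosing which pair goes into which block via a Latin-square-type indexing on the slabs so that the $c$ slabs together exhaust $\{1,\ldots,abc\}$ exactly once and every column of every rectangle receives $a/2$ full pairs. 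Verifying disjointness across slabs and the column-balance condition is where the bulk of the combinatorial work would lie.
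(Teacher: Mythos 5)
First, a point of comparison: the paper does not prove this statement at all. It is quoted, without proof, from Froncek \cite{Fr3} as a known existence result for magic rectangle sets, so there is no in-paper argument to measure your attempt against; I can only judge it on its own terms. Your setup is correct: the forced constants are indeed $\rho=\tfrac12 b(abc+1)$ and $\sigma=\tfrac12 a(abc+1)$, both integers since $a,b$ are even; reducing the problem to distributing the complementary pairs $\{x,\,abc+1-x\}$ so that rows and columns are unions of full pairs is a sound (and standard) line of attack; and you are right that $b\ge 4$ serves only to exclude the impossible case $(a,b)=(2,2)$.

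The difficulty is that what you have written is a strategy, not a proof: the explicit construction \emph{is} the content of the theorem, and you defer it, conceding that verifying disjointness and column balance ``is where the bulk of the combinatorial work would lie.'' Two concrete pieces are missing. (1) A base object: for every pair of even $a,b$ with $b\ge4$, an explicit $a\times b$ magic rectangle on $\{1,\dots,ab\}$ in which every row contains exactly $b/2$ entries from the lower half $\{1,\dots,ab/2\}$ and every column exactly $a/2$ of them. This ``half-balance'' is weaker than demanding that rows and columns be unions of complementary pairs (your double-reflection obstruction shows the stronger demand is the wrong target), and it is what your $2\times2$-block idea should be engineered to deliver; you never exhibit such a rectangle. (2) The lift to $c>1$: given such a base rectangle, the $t$-th rectangle of the set ($t=0,\dots,c-1$) can be obtained by the pair-preserving relabeling $x\mapsto x+t\cdot ab/2$ for $x\le ab/2$ and $x\mapsto x+(2c-2-t)\cdot ab/2$ for $x>ab/2$; half-balance then makes every row sum increase by exactly $(b/2)(ab)(c-1)$ and every column sum by $(a/2)(ab)(c-1)$, independently of $t$, giving $\rho$ and $\sigma$, with the $c$ images partitioning $\{1,\dots,abc\}$. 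Your proposal instead gestures at ``coupling the choice of slab to the row index via a cyclic shift,'' which is a red herring once each rectangle receives exactly one slab of $ab/2$ pairs, and at a ``Latin-square-type indexing'' whose existence and correctness are precisely what a proof would have to establish. As it stands, the argument has a genuine gap at its centre.
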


\begin{theorem}
If $abc\equiv 1 \mod 2$ and $a,b>1$, then there is magic rectangle set $MRS(a,b;c)$ for every $c\geq 1$.
\end{theorem}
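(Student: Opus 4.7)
Since $abc \equiv 1 \pmod 2$, each of $a, b, c$ is odd, and the forced row sum $\rho = b(abc+1)/2$ and column sum $\sigma = a(abc+1)/2$ are integers, removing the obvious arithmetic obstruction. This also matches the format of Theorem~\ref{magicconstant} after multiplying by the number of rows or columns.

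My plan is to present the $c$ rectangles as the slices $k = 1, \ldots, c$ of a single three-dimensional array on $\{1, \ldots, a\} \times \{1, \ldots, b\} \times \{1, \ldots, c\}$. Re-centring the labels by subtracting $(abc+1)/2$ turns the value set into $T = \{-(abc-1)/2, \ldots, (abc-1)/2\}$ and both target sums into $0$. Writing each $t \in T$ uniquely via the centred mixed-radix expansion $t = \alpha + a\beta + ab\gamma$ with $\alpha \in \{-(a-1)/2, \ldots, (a-1)/2\}$, $\beta \in \{-(b-1)/2, \ldots, (b-1)/2\}$, and $\gamma \in \{-(c-1)/2, \ldots, (c-1)/2\}$, the problem reduces to building three arrays $\alpha, \beta, \gamma$ on the index set whose joint map is a bijection onto the full product of centred ranges and whose rows and columns each sum to $0$ in every slice. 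The building blocks I would use are a centred Latin square in the $(i,j)$ pair for $\alpha$ (with a $k$-dependent cyclic shift along $i$), the transposed analogue for $\beta$, and a linear map $\gamma(i,j,k) \equiv s_1 i + s_2 j + k \pmod{c}$ re-centred, with $s_1, s_2$ units modulo $c$. Oddness of $c$ makes the centred residue set symmetric about zero, so each row and column of $\gamma$ in each slice ranges over a complete residue system and hence sums to $0$; analogous statements hold for $\alpha$ and $\beta$ by the centredness of the respective Latin square.

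The main obstacle is the simultaneous demand of bijectivity of the joint map and slice-wise balance along both axes in all three coordinates. A naive choice in which $\alpha$ depends only on $i$, $\beta$ only on $j$, and $\gamma$ only on $k$ trivially satisfies the sum conditions but fails to be surjective once $k$ is varied. The remedy is to attach $k$-dependent cyclic shifts with step sizes coprime to $a$ and $b$ to $\alpha$ and $\beta$ respectively; such units exist precisely because $a, b > 1$ are odd. Once the three shifts are compatibly chosen, bijectivity of the joint map follows from the independence of the induced cyclic actions on the mixed-radix range, and the row and column sum conditions follow by summing complete centred residue systems modulo odd $a$, $b$, and $c$. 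Verification is then a direct computation rather than the subtle step; designing the three shifts so that the three balance conditions and bijectivity hold at once is where the oddness hypothesis is essential.
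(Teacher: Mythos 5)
This theorem is quoted in the paper from Froncek \cite{Fr3} without proof, so there is no in-paper argument to measure you against; judged on its own merits, your proposal is a reasonable framework but not a proof, and its central claim contains a concrete false step. After recentring and writing each label as $\alpha + a\beta + ab\gamma$ with centred digits, you need the row sums (over $b$ cells) and the column sums (over $a$ cells) of the combined value to vanish in every slice, and you try to obtain this digit by digit via ``complete residue system'' arguments. But a row of a slice has $b$ cells while $\alpha$ lives modulo $a$ and $\gamma$ modulo $c$: for $\gamma(i,j,k) \equiv s_1 i + s_2 j + k \pmod{c}$ with $s_2$ a unit, the $b$ values in a row are $b$ consecutive terms of a $c$-cycle, and their centred sum is $0$ only when $c \mid b$ (take $a=b=3$, $c=5$: three consecutive centred residues from $\{-2,\dots,2\}$ do not sum to $0$). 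The same modulus/length mismatch afflicts the row sums of $\alpha$ (you would need $a \mid b$) and the column sums of $\beta$ and of $\gamma$. Oddness guarantees that $\rho$ and $\sigma$ are integers and that each full centred residue set is symmetric about $0$, but it does not make partial runs of a residue cycle balance.

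The second gap is bijectivity of the joint digit map. For affine choices such as $\alpha \equiv i + u j \pmod{a}$ and $\beta \equiv v i + j \pmod{b}$, the map $(i,j)\mapsto(\alpha,\beta)$ is generically not injective: with $a=b=3$ and $u=v=1$ the cells $(0,0)$ and $(1,2)$ collide in both digits. ``Bijectivity follows from the independence of the induced cyclic actions'' is precisely the statement that has to be proved, and it is where essentially all of the content of the theorem lives: the known constructions (Harmuth-style for a single odd magic rectangle, and Froncek's extension to sets of rectangles) use more intricate, non-affine placements exactly in order to reconcile joint bijectivity with the two balance conditions. As written, your argument establishes only the parity bookkeeping that removes the arithmetic obstruction and then defers the actual construction, so it does not prove the statement.
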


\begin{theorem}
If $a$ or $b$ is odd and $abc$ is even, then the magic rectangle set $ MRS(a,b;c) $ does not exist.
\end{theorem}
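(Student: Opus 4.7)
The plan is to derive the required row-sum and column-sum constants from a global counting argument and then exhibit a parity obstruction under the stated hypothesis.

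First I would compute the total sum of all entries across the $c$ rectangles. Since the entries form a bijection with $\{1,2,\ldots,abc\}$, this total equals $\frac{abc(abc+1)}{2}$. On the other hand, the defining property says every row in every rectangle sums to the same constant $\rho$, and every column to the same constant $\sigma$; hence one rectangle contributes $a\rho=b\sigma$, and the $c$ rectangles together contribute $ca\rho=cb\sigma=\frac{abc(abc+1)}{2}$. Solving gives the necessary conditions
\[
\rho=\frac{b(abc+1)}{2}, \qquad \sigma=\frac{a(abc+1)}{2}.
\]

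Next I would invoke the hypothesis. Since $abc$ is even, $abc+1$ is odd, so the parity of $2\sigma$ equals the parity of $a$ and the parity of $2\rho$ equals the parity of $b$. If $a$ is odd, then $a(abc+1)$ is a product of two odd numbers and therefore odd, so $\sigma$ is not an integer; symmetrically, if $b$ is odd then $\rho$ is not an integer. Either way, the common row- or column-sum that an $MRS(a,b;c)$ is required to have cannot be attained, which contradicts the existence of such an object.

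There is no real obstacle: the entire argument is a parity check once the row/column constants have been pinned down by the total sum. The only point that needs to be articulated carefully is that the constants $\rho$ and $\sigma$ are shared across all $c$ rectangles (as opposed to a possibly different constant per rectangle), but this is built directly into the definition of $MRS(a,b;c)$ given above and is what makes the counting argument collapse to the single equation $ca\rho=cb\sigma=\frac{abc(abc+1)}{2}$.
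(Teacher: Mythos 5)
Your argument is correct and complete: double-counting the total $\frac{abc(abc+1)}{2}$ against the $ca$ row sums and $cb$ column sums forces $\rho=\frac{b(abc+1)}{2}$ and $\sigma=\frac{a(abc+1)}{2}$, and when $abc$ is even the factor $abc+1$ is odd, so oddness of $a$ (resp.\ $b$) makes $\sigma$ (resp.\ $\rho$) a non-integer, contradicting the fact that it is a sum of integers. Note that the paper itself gives no proof of this statement --- it is quoted from Froncek \cite{Fr3} --- but your parity obstruction is the standard argument for this nonexistence result, and your remark that the constants $\rho,\sigma$ are shared across all $c$ rectangles (so that each rectangle has the same total $a\rho=b\sigma$) correctly identifies the only point where the definition needs to be invoked.
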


From the afore-mentioned theorems, we obtain the following characterisation for the existence of magic rectangle sets.
\begin{corollary} \label{MRS}
For $a>1$ and $b\geq 4$, a magic rectangle set $MRS(a,b;c)$ exists if and only if $a,b \equiv 0 \mod 2$ or $abc \equiv 1 \mod 2$.
\end{corollary}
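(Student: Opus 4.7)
The corollary is a clean logical consolidation of the three preceding theorems of Froncek, so my plan is simply to verify that together they partition all parameter cases (for $a>1$ and $b\geq 4$) into existence and non-existence, with no gaps.

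For the ``if'' direction, I would assume $a>1$ and $b\geq 4$ and split on the two disjuncts. If $a,b\equiv 0\pmod 2$, the first theorem applies verbatim (its hypothesis $b\geq 4$ is already in force), giving an $MRS(a,b;c)$ for every $c\geq 1$. If instead $abc\equiv 1\pmod 2$, then in particular $a$ and $b$ are odd, and since $a>1$ and $b\geq 4$ imply $a,b>1$, the second theorem applies and again yields an $MRS(a,b;c)$.

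For the ``only if'' direction, I would argue by contrapositive. Suppose neither $a,b\equiv 0\pmod 2$ nor $abc\equiv 1\pmod 2$ holds. The failure of the first condition means that at least one of $a,b$ is odd, and the failure of the second means that $abc$ is even. These are precisely the hypotheses of the third theorem, which asserts non-existence of $MRS(a,b;c)$. Hence if an $MRS(a,b;c)$ does exist, at least one of the two disjuncts in the corollary must hold.

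I do not expect a genuine obstacle here; the only thing worth double-checking is that the case split is exhaustive, i.e.\ that the negations of ``$a,b$ both even'' and ``$abc$ odd'' together yield exactly the hypothesis ``$a$ or $b$ odd, and $abc$ even'' of the non-existence theorem. Once that boolean check is in place, the proof is a two-sentence appeal to the three theorems.
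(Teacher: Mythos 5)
Your proposal is correct and matches the paper exactly: the paper gives no explicit proof, simply asserting that the corollary follows from the three preceding theorems of Froncek, and your case analysis (both disjuncts for existence, contrapositive via the non-existence theorem) is precisely the intended verification. The boolean check you flag does go through, since the negation of ``$a,b$ both even'' is ``$a$ or $b$ odd'' and the negation of ``$abc$ odd'' is ``$abc$ even''.
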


Now were are ready to generalise magic rectangle sets as follow.
\begin{definition}
A \emph{magic column rectangle $MCR(m^{(1)},n^{(1)};p)$} is a matrix with the following properties:
\begin{itemize}
\item the order is $(m+n)\times p$, where $m \leq n$,
\item the entries are $1,2,3,\ldots,(m+n)p$, each appearing once,
\item \label{constant} it is partitioned into two blocks of submatrix, one of order $m\times p$ and the other of order $n\times p$, where the sum of entries in every column of each block is constant.
\end{itemize}
\end{definition}
It is clear that the magic constant in Definition \ref{constant} is the sum of all entries divided by the number of columns in those two blocks, that is $\frac{(mp+np)(mp+np+1)}{2}\cdot\frac{1}{2p}=\frac{(m+n)(mp+np+1)}{4}$.

We shall give necessary and sufficient conditions for the existence of an $MCR(m^{(1)},n^{(1)};p)$, where $m$ and $n$ are both even.
\begin{theorem} \label{MCRmnp}
Let $m$ and $n$ be two even positive integers, where $m \leq n$. A magic column rectangle $MCR(m^{(1)},n^{(1)};p)$ exists if and only if
\begin{itemize}
\item $m+n \equiv 0 \mod 4$ and
\item $1=2(2pn+1)^2-(2pm+2pn+1)^2$ or $m\geq (\sqrt{2}-1)n+\frac{(\sqrt{2}-1)}{2p}$
\end{itemize}
\end{theorem}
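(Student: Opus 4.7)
The plan is to reduce the existence of an $MCR(m^{(1)}, n^{(1)}; p)$ to the existence of a distance magic labeling of the complete multipartite graph $K_{m_1,\ldots,m_p,n_1,\ldots,n_p}$ (with $m_i=m$ and $n_j=n$ for all indices), and then to apply Theorem \ref{Kmn} with $t=p$. The matching of parameters is strongly suggestive: the conditions in the statement are exactly those of Theorem \ref{Kmn} with $t$ replaced by $p$, so the proof should be more of a translation than a new construction.

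First I would unpack the distance magic condition on $G = K_{m_1,\ldots,m_p,n_1,\ldots,n_p}$. A vertex $x$ lying in a partite set $P$ satisfies $N(x) = V(G)\setminus P$, so for a labeling $\alpha$ with labels $\{1, 2, \ldots, p(m+n)\}$ one has $\sum_{y\in N(x)}\alpha(y) = S - \alpha(P)$, where $S = p(m+n)(p(m+n)+1)/2$ is the total label sum and $\alpha(P)$ denotes the label sum of $P$. Hence $\alpha$ is distance magic if and only if all $2p$ partite sets share a common label sum, which by averaging must equal $S/(2p) = (m+n)(mp+np+1)/4$, precisely the MCR magic constant derived right after its definition.

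Next I would build the explicit bijection between MCRs and such labelings. Given an $MCR(m^{(1)}, n^{(1)}; p)$, I label the $i$-th size-$m$ partite set of $G$ with the entries of the $i$-th column of the $m\times p$ block, and the $j$-th size-$n$ partite set with the entries of the $j$-th column of the $n\times p$ block; every partite set then sums to $(m+n)(mp+np+1)/4$, so the resulting $\alpha$ is distance magic. Conversely, in any distance magic labeling of $G$ the partite-set sums coincide across both sizes, so arranging each partite set's labels as a column of the appropriate block yields a valid MCR.

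This equivalence, combined with Theorem \ref{Kmn} applied at $t = p$, gives exactly the stated necessary and sufficient conditions. The only conceptual point worth flagging is that an MCR requires a \emph{single} common column sum across \emph{both} blocks, and it is precisely this property that forces the partite sums of sizes $m$ and $n$ to coincide in the corresponding labeling; this is the step that makes the correspondence faithful rather than lossy. Once that observation is in place, the remainder is direct parameter bookkeeping and the theorem follows from Theorem \ref{Kmn} with no further work.
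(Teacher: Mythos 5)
Your proposal is correct and follows essentially the same route as the paper: both reduce the existence of an $MCR(m^{(1)},n^{(1)};p)$ to the existence of a distance magic labeling of $K_{m_1,\ldots,m_p,n_1,\ldots,n_p}$ and then invoke Theorem \ref{Kmn} with $t=p$. The only difference is cosmetic: the paper passes through the closed distance magic property of $pK_m\cup pK_n$ and cites Theorem \ref{complement}, whereas you verify the equivalence directly by computing $\sum_{y\in N(x)}\alpha(y)=S-\alpha(P)$ in the complete multipartite graph, which amounts to inlining that complement step.
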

\begin{proof} Let $M$ be a matrix of order $(m+n)\times p$, where the entries are $1,2,\ldots,(m+n)p$. We partition $M$ into two blocks of order $m\times p$ and $n\times p$.

We shall use the entries of the block of order $m\times p$ as labels of vertices of $pK_{m}$ in such a way that the entries of one column correspond to vertices' labels of one component of $pK_m$. The entries of the block of order $n\times p$ is then used as labels of vertices of $pK_n$ such that the entries of one column of the block correspond with the labels of one component in $pK_n$.

Notice that $M$ is an $MCR(m^{(1)},n^{(1)};p)$ if and only if $G=pK_m \cup pK_n$ is closed distance magic. Clearly, $\bar{G} \approx K_{m_1,\ldots,m_p,n_1,\ldots,n_p}$, where $m_i=m$ and $n_i=n$. Then by utilising Theorems \ref{complement} and \ref{Kmn}, we complete the proof.
\end{proof}

\begin{definition} A \emph{magic column rectangle $MCR(n^{(p)};q)$} is a matrix with the following properties:
\begin{itemize}
\item the order is $np \times q$ where $n>1, p>1, q\geq1 $,
\item the entries are $1,2,3,\ldots,npq$, each appearing once,
\item it is partitioned into blocks of order $n\times1$, where the sum of entries in every block is constant. \label{constant2}
\end{itemize}
\end{definition}
It is clear that the magic constant in Definition \ref{constant2} is the sum of all entries divided by the number of blocks, that is  $\frac{npq(npq+1)}{2pq}=\frac{n(npq+1)}{2}$.

The necessary and sufficient conditions for the existence of an $MCR(n^{(p)};q)$ are given bellow.
\begin{theorem} \label{MCRnpq}
Let $n,p>1$ and $q\geq 1$. A magic column rectangle $MCR(n^{(p)},q)$ exists if and only if $n$ is even or all three of $n,p,q$ are odd.
\end{theorem}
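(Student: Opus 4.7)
The plan is to mirror the proof of Theorem \ref{MCRmnp}: translate existence of the magic column rectangle into a closed distance magic labeling of a disjoint union of cliques, and then pass to the complement via Theorems \ref{complement} and \ref{Hnp}.

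First I would observe that, since the definition places no constraint on how the $n \times 1$ blocks are distributed among the $q$ columns, an $MCR(n^{(p)};q)$ exists if and only if the set $\{1,2,\ldots,npq\}$ admits a partition into $pq$ subsets of size $n$ with equal sums. To make this graph-theoretic, let $G=pqK_n$ (the disjoint union of $pq$ copies of $K_n$) and identify each $n$-block with the vertex set of one copy of $K_n$, using the block's entries as vertex labels. Since the closed neighborhood $N[x]$ of any vertex of $G$ is exactly the clique containing $x$, the equal-sum condition is precisely the requirement that $G$ admit a closed distance magic labeling.

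Next I would use that $\overline{pqK_n}$ is the join of $pq$ copies of $\overline{K_n}$, that is, the complete multipartite graph $H_{n,pq}$. By Theorem \ref{complement}, $G$ is closed distance magic if and only if $H_{n,pq}$ is distance magic. Applying Theorem \ref{Hnp} (whose hypotheses $n>1$ and $pq>1$ are met since $n,p>1$), $H_{n,pq}$ is distance magic if and only if $n$ is even or both $n$ and $pq$ are odd. Since $pq$ is odd exactly when both $p$ and $q$ are odd, this is the claimed condition.

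I do not foresee any serious obstacle — the argument is a purely structural chain of equivalences resting on the two cited theorems. The only subtlety worth flagging is the first equivalence above: one must confirm that the matrix layout imposed by the definition introduces no extra constraint beyond the existence of an equal-sum partition. This is immediate because the definition only requires the partition into $n \times 1$ blocks, and the assignment of those $pq$ blocks to the $q$ columns (in groups of $p$) is arbitrary.
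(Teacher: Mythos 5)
Your argument is exactly the paper's proof: both identify an $MCR(n^{(p)};q)$ with a closed distance magic labeling of $pqK_n$, pass to the complement $H_{n,pq}$ via Theorem \ref{complement}, and invoke Theorem \ref{Hnp}, noting that $pq$ is odd precisely when $p$ and $q$ both are. Your extra remark that the matrix layout adds no constraint beyond the equal-sum partition is a sensible clarification the paper leaves implicit.
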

\begin{proof}
Let $M$ be a matrix of order $np\times q$ where the entries are $1,2,\ldots,npq$, each appearing once. We partition $M$ into blocks of order $n\times 1$ and then use the entries of every block to label of vertices in one component of $pqK_n$.

Notice that $M$ is an $MCR(n^{(p)},q)$ if and only if $G=pqK_n$ is closed distance magic.
Since $\overline{G}\approx H_{n,pq}$, by Theorems \ref{complement} and \ref{Hnp}, the proof completes.
\end{proof}

In the following sections, the three matrices: magic rectangle set $MRS(a,b;c)$, magic column rectangles $MCR(m^{(1)},n^{(1)};p)$, and $MCR(n^{(p)};q)$ will be utilised to construct distance magic labelings for graphs obtained from Cartesian, strong, lexicographic, and kronecker products. We call such a labeling a matrix labeling which is defined as follow.
\begin{definition}
Let $G$ and $H$ be two graphs with $V(G)=\{g_1,g_2,\ldots,g_m\}$, $V(H)=\{h_1,h_2,\ldots,h_n\}$.
Suppose that $M$ is a matrix of order $m \times n$ whose entries are $1,2,\ldots,mn$, each appearing once.
If the operator $\ast$ is either $\boxtimes$, $\times$, $\otimes$, or $\circ$, we define an \textbf{$M$-matrix labeling} of $G*H$
as a map $\alpha:V(G) \times V(H) \rightarrow \{1,2,3,\ldots,mn\} $ where $\alpha(g_i,h_j)=M_{i,j}$.
\end{definition}

We shall start with distance magic labelings of Cartesian and strong products of a disjoint copy of complete multipartite graphs; both labelings are obtained from magic rectangle sets.

\section{Distance Magic Cartesian Product Graphs}

\begin{theorem}
Let $a,b\geq 1$, $m,p,q>1$, and $n\geq 4$. $aH_{m,p}\times bH_{n,q}$ is distance magic if and only if $m,n\equiv 0 \mod 2$ or $mnabpq \equiv 1 \mod 2$.
\end{theorem}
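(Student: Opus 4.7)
The plan is to handle the two directions separately, using in both the fact that $aH_{m,p}\times bH_{n,q}$ is $r$-regular with $r=m(p-1)+n(q-1)$ on $n_0:=abmnpq$ vertices; by Theorem~\ref{magicconstant} any distance magic labeling $\alpha$ has constant $\mathsf{k}=r(n_0+1)/2$. For necessity I would use the fact that two vertices $v,v'$ in the same partite set $V_{k,l}$ of the same copy of $H_{m,p}$ are twins there, so equating the neighbor sums at $(v,w)$ and $(v',w)$ in the Cartesian product yields $\sum_{w'\in N(w)}[\alpha(v,w')-\alpha(v',w')]=0$ for every $w\in V(bH_{n,q})$. Because $q>1$, the standard vanishing lemma for complete multipartite graphs implies that any function summing to zero on every open neighborhood of $bH_{n,q}$ must vanish on each partite set. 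Hence the partial block-sum $S'(k,l,k',l'):=\sum_{w\in W_{k',l'}}\alpha(v,w)$ is well defined for $v\in V_{k,l}$, and summing over $v\in V_{k,l}$ gives $B(k,l,k',l')=m\cdot S'(k,l,k',l')$, where $B(k,l,k',l')$ denotes the sum of labels on $V_{k,l}\times W_{k',l'}$. Thus $m\mid B(k,l,k',l')$, and the symmetric argument in the other factor gives $n\mid B(k,l,k',l')$.

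Substituting $B=mS'=nS$ into the neighbor-sum equation at $(v,w)$ and summing over $l$ with $k,k',l'$ fixed shows that the marginal $\sum_l B(k,l,k',l')$ is independent of $l'$; the analogous argument gives that $\sum_{l'}B(k,l,k',l')$ is independent of $l$. Re-substituting then pins $B(k,l,k',l')$ to the constant $mn(n_0+1)/2$ for every block. The divisibilities $m\mid B$ and $n\mid B$ now read ``$n(n_0+1)$ and $m(n_0+1)$ are both even'', which is equivalent to ``$m$ and $n$ are both even, or $n_0$ is odd''. Since $n_0=abmnpq$ is odd exactly when all six of $m,n,a,b,p,q$ are odd, necessity is established.

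For sufficiency, take an $MRS(m,n;abpq)$ (transposing the inner dimensions if $m>n$ so that the conditions $a>1$ and $b\geq 4$ of Corollary~\ref{MRS} are met via $m>1$ and $n\geq 4$); its existence under either branch of the hypothesis is exactly Corollary~\ref{MRS}. Assign the $abpq$ constituent rectangles bijectively to the $abpq$ blocks $V_{k,l}\times W_{k',l'}$, matching rows of a rectangle with vertices of $V_{k,l}$ and columns with vertices of $W_{k',l'}$, and use the entries as labels. Every column sum equals $\sigma=m(n_0+1)/2$ and every row sum equals $\rho=n(n_0+1)/2$, so at a vertex $(v,w)$ with $v\in V_{k,l}$ and $w\in W_{k',l'}$ the neighbor sum collapses to $(p-1)\sigma+(q-1)\rho=r(n_0+1)/2=\mathsf{k}$.

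The main obstacle I anticipate is the necessity direction: the elementary integrality bound from Theorem~\ref{magicconstant} alone is too weak (it still permits, for instance, $m$ even and $n$ odd with $q$ odd), and only the twin argument combined with the block-sum computation sharpens it to the exact parity condition on $m$ and $n$.
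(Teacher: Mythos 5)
Your sufficiency argument is correct and is essentially the paper's: both take an $MRS(m,n;abpq)$ (which exists under either hypothesis by Corollary~\ref{MRS}, directly, since $m>1$ and $n\geq 4$ --- no transposition is needed), assign one rectangle to each block $V_{k,l}\times W_{k',l'}$, and observe that the weight at any vertex is $(p-1)\sigma+(q-1)\rho$. Your necessity argument starts the same way as the paper's (comparing weights of vertices lying in a common partite set to force each $m\times n$ block to have constant row sums $B/m$ and constant column sums $B/n$), but your endgame is genuinely different and, where it works, cleaner: instead of reassembling the blocks into an $MRS(m,n;abpq)$ and citing the nonexistence half of Corollary~\ref{MRS}, you extract the parity condition directly from the integrality of $B/m=n(n_0+1)/2$ and $B/n=m(n_0+1)/2$. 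That final computation is right, as is the observation that regularity forces each component's label total, hence the common block sum, to be $mn(n_0+1)/2$ --- \emph{provided} the block sums are in fact all equal.

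That proviso is where the gap sits. Writing $B(l,l')$ for the block sums within one component, the weight equation reads $\mathsf{k}=\frac{1}{m}\bigl(B(l,\cdot)-B(l,l')\bigr)+\frac{1}{n}\bigl(B(\cdot,l')-B(l,l')\bigr)$. Summing over $l$ gives $p\mathsf{k}=\frac{T}{m}+\bigl(\frac{p-1}{n}-\frac{1}{m}\bigr)B(\cdot,l')$, which determines the marginal $B(\cdot,l')$ independently of $l'$ only when $m(p-1)\neq n$; the other marginal is pinned only when $n(q-1)\neq m$. Both degeneracies are admissible under the hypotheses (e.g.\ $m=3$, $p=3$, $n=6$, where the theorem asserts non-magicness and necessity carries the burden), and in such cases the linear constraints leave the block sums of the form $c+u_l+v_{l'}$ with genuine freedom: one can choose integer block sums satisfying every divisibility and marginal condition you derive without any being $mn(n_0+1)/2$, so your argument as stated does not reach a contradiction there. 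You need an additional idea for these exceptional parameter pairs. I should add, in fairness, that the paper's own proof elides essentially the same point: it asserts that $\sum_{v_i\in V_{1,j}}B_{i,k}$ is a single constant $\rho_1$ over all $j$ after an argument that only establishes constancy in $k$ for each fixed $j$, and its subsequent step $(p-1)(\rho_1-\rho_j)=0$ relies on that unproved cross-index independence. So your proposal reproduces the paper's architecture with a tidier conclusion, but neither your marginal-summation step nor the paper's assertion fully justifies the equality of block sums across all blocks.
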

\begin{proof} Let $V(aH_{m,p})=\{v_1,v_2,...,v_{amp}\}$ and denote by $U(bH_{n,q})$, the set $V(bH_{n,q})=\{u_1,u_2,...,u_{bnd}\}$ where the adjacencies are given in Section \ref{Intro}. Notice that $aH_{m,p}\times bH_{n,q} \approx ab(H_{m,p} \times bH_{n,q})$. If $p,q \equiv 0 \mod 2$ or $mnabpq\equiv 1 \mod 2$, by Corollary \ref{MRS}, there exists a magic rectangle set $MRS(m,n;abpq)$.

From $MRS(m,n;abpq)$, we could obtain $abpq$ matrices of order $m \times n$ where the sum of entries in every row of each matrix is $\rho$ and the sum of entries in every column of each matrix is $\sigma$. We shall use these $abpq$ matrices to construct a matrix $M$ of order $amp \times bnq$, by stacking $ap$ matrices of order $m \times n$ vertically $bq$ times.

Let $\alpha$ be an $M-$matrix labeling for $H_{m,p} \times bH_{n,q}$. It will be shown that $\alpha$ is a distance magic labeling. For every $(v_i,u_j)\in V(aH_{m,p}\times bH_{n,q})) $, we obtain
\[w((v_i,u_j ))=\sum_{u_{j'}\sim u_j} \alpha((v_i,u_{j'} )) +\sum_{v_{i'}\sim v_i} \alpha((v_{i'},u_j )).\]

For every $v_i$ and $u_j$ in $V_{e,f}$ and $U_{g,h}$, respectively,
\[\sum_{u_{j'}\sim u_j} \alpha((v_i,u_{j'} )) =\sum_{u_{j'}\in U_{g,h'},h'\neq h} \alpha((v_i,u_{j'} )) = \sum_{u_{j'}\in U_{g}} M_{i,j'}-\sum_{u_{j'}\in U_{g,h}} M_{i,j'} =q\rho-\rho,\]
and
\[\sum_{v_{i'}\sim v_i} \alpha((v_{i'},u_j )) = \sum_{v_{i'}\in U_{e,f'},f'\neq f} \alpha((v_{i'},u_{j} ))= \sum_{u_{i'}\in V_{e}} M_{i',j}-\sum_{u_{i'}\in V_{e,f}} M_{i',j}=p\sigma-\sigma.\]
Thus $w((v_i,u_j ))=q\rho-\rho+p\sigma-\sigma $ is a constant for every $(v_i,u_j) \in V(aH_{m,c}\times bH_{n,d})$.

For the sufficiency, if $aH_{m,p} \times bH_{n,q}$ is distance magic, then there exists a distance magic labeling $\alpha$, which is an $M$-matrix labeling, where $M$ can be partitioned into $ab$ blocks of order $mp \times nq$ in such a way that each block corresponds with one component of $aH_{m,p} \times bH_{n,q}$.

Consider the block $B$ where $B_{i,j}=l((v_i,u_j))$ for $1\leq i\leq mp$ and $1\leq j\leq nq$. Notice that
\[w((v_1,u_1))=w((v_1,u_2))=\ldots=w((v_1,u_n)),\]
\[C+\sum_{v_i\in V_{1,j},j\neq 1}B_{i,1}=C+\sum_{v_i\in V_{1,j},j\neq 1}B_{i,2}=\ldots=C+\sum_{v_i\in V_{1,j},j\neq 1}B_{i,n},\]
where $C=\sum_{(u,v)\in K} l((u,v))$ and $K=N((v_1,u_1))\cap N((v_1,u_2))\cap \ldots N((v_1,u_n))$.

Thus,
\begin{equation}
\sum_{v_i\in V_{1,j},j\neq 1}B_{i,1}=\sum_{v_i\in V_{1,j},j\neq 1}B_{i,2}=\ldots=\sum_{v_i\in V_{1,j},j\neq 1}B_{i,n}.
\end{equation}
Similarly,
\begin{equation}
\sum_{v_i\in V_{1,j},j\neq 2}B_{i,1}=\sum_{v_i\in V_{1,j},j\neq 2}B_{i,2}=\ldots=\sum_{v_i\in V_{1,j},j\neq 2}B_{i,n},
\end{equation}
\[\vdots\]
\begin{equation} \tag{p}
\sum_{v_i\in V_{1,j},j\neq p}B_{i,1}=\sum_{v_i\in V_{1,j},j\neq p}B_{i,2}=\ldots=\sum_{v_i\in V_{1,j},j\neq p}B_{i,n}.
\end{equation}

By adding equations (1) to(p), we obtain
\[(p-1)\sum_{i\in V_1} B_{i,1}=(p-1)\sum_{i\in V_1} B_{i,2}=\ldots=(p-1)\sum_{i\in V_1} B_{i,n},\]
or
\begin{equation} \tag{*}
\sum_{i\in V_1} B_{i,1}=\sum_{i\in V_1} B_{i,2}=\ldots=\sum_{i\in V_1} B_{i,n}.
\end{equation}

By substracting equation (*) with each of the equations from (1) to (p), we obtain the following.
\[\sum_{v_i\in V_{1,j}}B_{i,1} = \sum_{v_i\in V_{1,j}}B_{i,2} = \ldots = \sum_{v_i\in V_{1,j}}B_{i,n}, j=1,\ldots,p.\]
Consequently, $\sum_{v_i\in V_{1,j}}B_{i,k}$ is a constant, say $\rho_1$, for $1\leq j\leq p, 1\leq k\leq n$.
Similarly, for $1\leq j\leq p, (l-1)n+1\leq k\leq ln, l=2,\ldots, q$, $\sum_{v_i\in V_{1,j}}B_{i,k}$ is a constant, say $\rho_l$.

Since cartesian product is commutative, similar arguments also hold for $B^t$, and so for $1\leq j\leq q, (l-1)m+1\leq k\leq lm, l=1,\ldots,p$, $\sum_{u_i\in U_{1,j}}B_{k,i}$ is a constant, say $\sigma_l$. 	

Now, we shall prove that $\rho_1=\rho_2=\ldots=\rho_d$. Notice that $w((v_1,u_1))=w((v_1,u_{(j-1)n+1}))$, or 
$(p-1)\rho_1+(q-1)\sigma_1=(p-1)\rho_j+(q-1)\sigma_1$, and so
$(p-1)(\rho_1-\rho_j)=0$. Since $p>1$, then $\rho_1=\rho_j$; and so, $\rho_1=\rho_2=\ldots=\rho_d$. 

Similarly, by considering $w((v_1,u_1))=w((v_{(j-1)m+1},u_1))$, we obtain $\sigma_1=\sigma_2=\ldots=\sigma_c$. Thus the matrix $B$ can be partitioned into $pq$ blocks of order $m \times n$, such that the row sum is $\sigma_1$ and the column sum is $\rho_1$ in every block. Considering the other $ab-1$ blocks of order $mp \times nq$, we could obtain similar result. Therefore $M$ can be partitioned into $abpq$ matrices of order $m \times n$, such that every row sum is $\sigma_1$ and every column sum is $\rho_1$ in every block; which mean $MRS(m,n;abpq)$ exists. By Corollary \ref{MRS}, we obtain $m,n \equiv 0 \mod 2$ or $ mnabpq \equiv 1 \mod 2$.
\end{proof}

\section{Distance Magic Strong Product Graphs}

\begin{theorem}
Let $a,b \geq 1$, $m,p,q>1$, and $n\geq 4$. $aH_{m,p} \boxtimes bH_{n,q} $ is distance magic if and only if $m,n \equiv 0 \mod 2$ or $mnabpq \equiv 1 \mod 2$.
\end{theorem}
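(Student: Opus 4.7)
The plan is to mirror the proof of the previous Cartesian theorem, using the very same $M$-matrix labeling built from an $MRS(m,n;abpq)$, and to verify that the extra ``diagonal'' neighbors introduced by the strong product contribute a constant amount to every vertex weight.

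For sufficiency, assume $m,n \equiv 0 \pmod 2$ or $mnabpq \equiv 1 \pmod 2$; by Corollary \ref{MRS} an $MRS(m,n;abpq)$ exists, and I build the same matrix $M$ of order $amp \times bnq$ as in the Cartesian proof. For a vertex $(v_i,u_j)$ with $v_i \in V_{e,f}$ and $u_j \in U_{g,h}$, the open neighborhood in $aH_{m,p}\boxtimes bH_{n,q}$ splits into horizontal (same first coordinate), vertical (same second coordinate), and diagonal (both coordinates adjacent) parts. The horizontal and vertical sums are identical to the Cartesian computation and give $(q-1)\rho + (p-1)\sigma$. For the diagonal sum, I apply inclusion--exclusion on the blocks $V_{e,f} \subset V_e$ and $U_{g,h} \subset U_g$, together with the fact that every matrix of the $MRS$ has total $m\rho = n\sigma$; the four terms evaluate to $pq \cdot m\rho$, $q \cdot m\rho$, $p \cdot m\rho$, and $m\rho$ respectively, collapsing to $(p-1)(q-1)m\rho$. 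Hence
\[
w((v_i,u_j)) = (q-1)\rho + (p-1)\sigma + (p-1)(q-1)m\rho
\]
is a constant independent of the chosen vertex, giving a distance magic labeling.

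For necessity, I would follow the Cartesian necessity argument almost verbatim. The key observation is that when comparing $w((v_1,u_1)) = w((v_1,u_2)) = \ldots = w((v_1,u_n))$ for $u_1,\ldots,u_n \in U_{1,1}$, both the horizontal and the diagonal summations depend only on the partite set of the second coordinate (which is fixed), not on the specific index $j$. So these contributions cancel when the weights are equated, leaving exactly equation (1) of the Cartesian proof involving the vertical sums. Replacing $v_1$ by a representative of $V_{1,k}$ for $k = 2,\ldots,p$ reproduces equations (2)--(p), and the remainder of the Cartesian argument (adding and subtracting equations to obtain row/column constancy, transposing to handle the other direction, and iterating over all $ab$ component blocks) goes through unchanged, forcing the existence of $MRS(m,n;abpq)$ and hence the parity condition via Corollary \ref{MRS}.

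The main obstacle is the extra diagonal term. In the sufficiency direction it must evaluate to a constant, which works because the identity $m\rho = n\sigma$ forces the inclusion--exclusion expansion to telescope; in the necessity direction it must cancel silently, which it does because in the strong product the diagonal part of a neighborhood depends only on the partite sets of the two coordinates, not on the specific representatives. Thus no essentially new ideas are needed beyond those already deployed in the Cartesian case.
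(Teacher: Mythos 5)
Your sufficiency argument is correct and follows the paper's construction exactly; your inclusion--exclusion evaluation of the diagonal contribution as $(p-1)(q-1)m\rho$ is in fact more careful than the paper's, which records that sum as $pmq\rho$ (harmless either way, since only constancy matters). The gap is in the necessity direction, at the one step where the Cartesian argument does \emph{not} go through unchanged. Your reason for discarding the diagonal terms --- that the diagonal part of a neighborhood depends only on the partite sets of the two coordinates --- is valid for the comparisons $w((v_1,u_1))=\cdots=w((v_1,u_n))$ with $u_1,\dots,u_n$ all in $U_{1,1}$, so equations (1)--(p) and the block-sum constants $\rho_1,\dots,\rho_q$ and $\sigma_1,\dots,\sigma_p$ are obtained exactly as you describe. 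But the final step of the Cartesian necessity proof compares $w((v_1,u_1))$ with $w((v_1,u_{(j-1)n+1}))$, whose second coordinates lie in \emph{different} partite sets $U_{1,1}$ and $U_{1,j}$; there the two diagonal neighborhoods are genuinely different sets and do not cancel. Using the constants already established, the diagonal parts contribute $(p-1)n\sum_{i\neq 1}\rho_i$ and $(p-1)n\sum_{i\neq j}\rho_i$ respectively, so equating the weights yields $(p-1)(1-n)(\rho_1-\rho_j)=0$ rather than the Cartesian $(p-1)(\rho_1-\rho_j)=0$. The conclusion $\rho_1=\rho_j$ still follows because $p>1$ and $n\geq 4$, but this must be checked: the coefficient changes, and had it vanished the argument would collapse. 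This is precisely the step the paper works out explicitly (obtaining $(p-1-n)(\rho_1-\rho_j)=0$ and invoking $p\neq n+1$), so you should carry out this computation rather than assert that the remainder of the Cartesian argument is unchanged.
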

\begin{proof}
Notice that $aH_{m,p} \boxtimes bH_{n,q} \approx ab(H_{m,p} \boxtimes bH_{n,q})$. Let $V(aH_{m,p})=\{v_1,v_2,\ldots,v_{amp}\}$ and we denote by $U(bH_{n,q})$, the set $V(bH_{n,q})=\{u_1,u_2,\ldots,u_{bnq}\}$ where the adjacencies are given in Section \ref{Intro}. If $m,n \equiv 0 \mod 2$ or $mnabpq \equiv 1 \mod 2$, then by Corrolary \ref{MRS}, there exists a magic rectangle set $MRS(m,n;abcd)$.

From $MRS(m,n;abcd)$, we substract $abpq$ matrices, each of order $m \times n$. Let the sum of entries in every row of each matrix is $\rho$ and the sum of entries in every column of each matrix is $\sigma$. We utilise those $abpq$ matrices to build a matrix $M$ of order $amc\times bnd$ by stacking $ap$ matrices of order $m \times n$ vertically, $bq$ times.

Let $\alpha$ be an $M-$matrix labeling on graph $aH_{m,p} \boxtimes bH_{n,q}$. We will show that $\alpha$ is a distance magic labeling. For every $(v_i,u_j)\in V(aH_{m,p} \boxtimes bH_{n,q}))$, we obtain
\[w((v_i,u_j))=\sum_{u_{j'}\sim u_j} \alpha((v_i,u_{j'} )) +\sum_{v_{i'}\sim v_i} \alpha((v_{i'},u_j )) + \sum_{u_{j'}\sim u_j,v_{i'}\sim v_i} \alpha((v_{i'},u_{j'}.))\]

Let $v_i\in V_{e,f}$ and $u_j\in U_{g,h}$. Then
\[\sum_{u_{j'}\sim u_j} \alpha((v_i,u_{j'} ))=\sum_{u_{j'}\in U_{g,h'},h'\neq h} \alpha((v_i,u_{j'} ))= \sum_{u_{j'}\in U_{g}} M_{i,j'}-\sum_{u_{j'}\in U_{g,h}} M_{i,j'}=q\rho-\rho,\]
\[\sum_{v_{i'}\sim v_i} \alpha((v_{i'},u_j ))=\sum_{v_{i'}\in V_{e,f'},f'\neq f} \alpha((v_{i'},u_{j} ))= \sum_{u_{i'}\in V_{e}} M_{i',j}-\sum_{u_{i'}\in V_{e,f}} M_{i',j}=p\sigma-\sigma,\]
and
\[\sum_{u_{j'}\sim u_j,v_{i'}\sim v_i} \alpha((v_{i'},u_{j'} ))=\sum_{{\scriptsize \begin{array}{c}
                                                                         u_{j'}\in U_{g,h'},h'\neq h, \\
                                                                         v_{i'}\in V_{e,f'},f'\neq f
                                                                      \end{array}}}
\alpha((v_{i'},u_{j'} )) = \sum_{{\scriptsize \begin{array}{c}
                                                                         u_{j'}\in U_{g,h'},h'\neq h, \\
                                                                         v_{i'}\in V_{e,f'},f'\neq f
                                                                      \end{array}}}
M_{i',j'}=pmq\rho.\]
Therefore $w((v_i,u_j))=q\rho-\rho+p\sigma-\sigma+cmd\rho $ is a constant for every $(v_i,u_j)\in V(aH_{m,p}\boxtimes bH_{n,q})$.

For the sufficiency, if $aH_{m,p} \boxtimes bH_{n,q}$ is distance magic, then there is distance magic labeling $\alpha$, which is an $M$-matrix labeling, where $M$ could be partitioned into $ab$ blocks of order $mp \times nq$ such that each block corresponds with one component of $aH_{m,p}\boxtimes bH_{n,q}$.

Consider a block $B$ where $B_{i,j}=l((v_i,u_j))$ for $1\leq i\leq mp$ and $1\leq j\leq nq$. Notice that
\[w((v_1,u_1))=w((v_1,u_2))=\ldots=w((v_1,u_n)),\]
\[C+\sum_{v_i\in V_{1,j},j\neq 1}B_{i,1}=C+\sum_{v_i\in V_{1,j},j\neq 1}B_{i,2}=\ldots=C+\sum_{v_i\in V_{1,j},j\neq 1}B_{i,n},\]
where $C=\sum_{(u,v)\in K} l((u,v))$ and $K=N((v_1,u_1))\cap N((v_1,u_2))\cap \ldots N((v_1,u_n))$, and so
\begin{equation} \tag{(1)}
\sum_{v_i\in V_{1,j},j\neq 1}B_{i,1}=\sum_{v_i\in V_{1,j},j\neq 1}B_{i,2}=\ldots=\sum_{v_i\in V_{1,j},j\neq 1}B_{i,n}.
\end{equation}

Similarly, we could obtain
\begin{equation} \tag{2}
\sum_{v_i\in V_{1,j},j\neq 2}B_{i,1}=\sum_{v_i\in V_{1,j},j\neq 2}B_{i,2}=\ldots=\sum_{v_i\in V_{1,j},j\neq 2}B_{i,n},
\end{equation}
\[\vdots\]
\begin{equation} \tag{p}
\sum_{v_i\in V_{1,j},j\neq c}B_{i,1}=\sum_{v_i\in V_{1,j},j\neq c}B_{i,2}=\ldots=\sum_{v_i\in V_{1,j},j\neq c}B_{i,n}.
\end{equation}

By adding equations (1) to (p), we obtain
\[(p-1)\sum_{i\in V_1} B_{i,1}=(p-1)\sum_{i\in V_1} B_{i,2}=\ldots=(p-1)\sum_{i\in V_1} B_{i,n},\]
\begin{equation} \tag{**}
\sum_{i\in V_1} B_{i,1}=\sum_{i\in V_1} B_{i,2}=\ldots=\sum_{i\in V_1} B_{i,n}.
\end{equation}

Now, substracting equation (**) with each of the equations from (1) to (p), we obtain
\[\sum_{v_i\in V_{1,j}}B_{i,1}=\sum_{v_i\in V_{1,j}}B_{i,2}=\ldots=\sum_{v_i\in V_{1,j}}B_{i,n}, 1 \leq j \leq p.\]
Consequently, $\sum_{v_i\in V_{1,j}}B_{i,k}$ is a constant, say $\rho_1$, for $1\leq j\leq p, 1\leq k\leq n.$ Similarly, we could obtain $\sum_{v_i\in V_{1,j}}B_{i,k}$ is also a constant, say $\rho_l$, for $1\leq j\leq p, (l-1)n+1 \leq k \leq ln, l=2,\ldots,q$.


Since strong product is commutative, the afore-mentioned reasoning also applies for $B^t$ and so
$\sum_{u_i\in U_{1,j}}B_{k,i}=y_1$ is a constant, say $\sigma_l$, for for $1\leq j\leq q, (l-1)m+1 \leq k \leq lm, l=1,\ldots,p$,

Now, we shall prove that $\rho_1=\rho_2=\ldots=\rho_q$. Notice that $w((v_1,u_1))=w((v_1,u_{(j-1)n+1}))$, and so
\[(p-1)\rho_1+(q-1)\sigma_1+\sum_{i=2}^{i=q}n\rho_i=(p-1)\rho_j+(q-1)\sigma_1+\sum_{i=1,i\neq j}^{i=q}n\rho_i,\]
\[(p-1)\rho_1+n\rho_j=(c-1)\rho_j+n\rho_1,\]
\[(p-1-n)(\rho_1-\rho_j)=0.\]
Since $p<n+1$, then $\rho_1=\rho_j$; and so, $\rho_1=\rho_2=\ldots=\rho_q$.

Considering $w((v_1,u_1))=w((v_{(j-1)m+1},u_1))$, we could also obtain $\sigma_1=\sigma_2=\ldots=\sigma_p$. Thus $B$ can be partitioned into $pq$ blocks of order $m\times n$, so that the row sum is $\sigma_1$ and the column sum is $\rho_1$ in every block. Considering other $ab-1$ blocks of order $mp \times nq$, we will get similar result. Thus, $M$ can be partitioned into $abpq$ matrices of order $m \times n$, such that the row sum is $\sigma_1$ and the column sum is $\rho_1$ in every block; and so, $MRS(m,n;abpq)$ exists. By Corollary \ref{MRS}, we obtain $m , n \equiv 0 \mod 2$ or $mnabpq \equiv 1 \mod 2$.
\end{proof}

\section{Distance Magic Lexicographic Product Graphs}

In the previous sections, we constructed distance magic labelings for cartesian and strong products of two disjoint copies of complete multipartite graphs. Construction of labelings for cartesian and strong products of more general graphs is still unknown. However, for lexicographic product, we have obtained sufficient conditions for the existence of distance magic labelings for product of any regular graph with a disjoint copy of complete multipartite graphs or with a null graph. We shall utilise a magic column rectangle $MCR(n^{(p)};q)$ in the construction of the labelings.

\begin{theorem} \label{lexicographic}
Let $m \geq 1$ and $n,p>1$. For an $r$-regular graph $G$ of order $m$, if $n \equiv 0 \mod 2$ or $apm \equiv 1 \mod 2$, then  $G\circ aH_{n,p}$ is distance magic.
\end{theorem}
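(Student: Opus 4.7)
The plan is to construct a distance magic labeling of $G \circ aH_{n,p}$ as an $M$-matrix labeling, where $M$ is obtained by rearranging a magic column rectangle $MCR(n^{(p)}; am)$. First, I would verify that the hypothesis guarantees the existence of such an MCR via Theorem \ref{MCRnpq}: if $n$ is even, we land in the first case of that theorem; if $n$ is odd, then $apm \equiv 1 \pmod 2$ forces $a, p, m$ all odd, hence $am$ is odd, so the triple $(n, p, am)$ is entirely odd and the second case applies.

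Next I would build $M$ from the MCR. The MCR is an $np \times am$ matrix whose entries are $1, 2, \ldots, amnp$, and whose $n \times 1$ blocks each sum to $C = \frac{n(anmp+1)}{2}$. Split its columns into $a$ consecutive groups of $m$ columns, transpose each group, and juxtapose the resulting $a$ matrices (each of order $m \times np$) horizontally to obtain $M$ of order $m \times anp$. By construction, the $anp$ entries in each row $i$ of $M$ form $ap$ consecutive groups of $n$ entries (one group per original MCR block), each summing to $C$. In particular every row of $M$ sums to $apC$, and if we index the columns of $M$ by $V(aH_{n,p})$ in the natural order of Section \ref{Intro}, then for every row $i$ and every partite set $V_{k,l}$, the entries of row $i$ at positions in $V_{k,l}$ sum to $C$.

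Now let $\alpha$ be the $M$-matrix labeling of $G \circ aH_{n,p}$ and fix a vertex $(g_i, h_j)$ with $h_j \in V_{k,l}$. Its neighbors split into two types: the vertices $(g_i, h_{j'})$ with $h_{j'} \in V_{k,l'}$, $l' \neq l$, contributing $\sum_{l' \neq l} \sum_{h_{j'} \in V_{k,l'}} M_{i,j'} = (p-1)C$; and the vertices $(g_{i'}, h_{j'})$ with $g_{i'} \sim g_i$ ranging over all $j'$, contributing $\sum_{g_{i'} \sim g_i} \sum_{j'} M_{i',j'} = r \cdot apC$ by the $r$-regularity of $G$ combined with the constancy of row sums. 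Summing gives $w((g_i, h_j)) = (p-1)C + rapC$, independent of $(g_i, h_j)$, so $\alpha$ is a distance magic labeling.

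The only step that truly requires care is the bookkeeping in the construction of $M$: one must verify that the $n \times 1$ block structure of the MCR aligns precisely with the partite sets $V_{k,l}$ of $aH_{n,p}$ under the column-indexing convention adopted. Once that alignment is established, both the weight calculation and the parity check for the existence of the MCR are routine, and regularity of $G$ does the rest of the work in converting an MCR into a distance magic labeling.
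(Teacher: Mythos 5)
Your proof is correct and takes essentially the same route as the paper's: both obtain the labeling matrix $M$ by transposing/rearranging a magic column rectangle supplied by Theorem~\ref{MCRnpq} so that its $n\times 1$ blocks align with the partite sets $V_{k,l}$, and then compute the weight as (row-sum contribution from $G$-neighbors) plus (block-sum contribution from the $p-1$ other partite sets). The only immaterial difference is that the paper starts from $MCR(n^{(ap)};m)$ and takes a single transpose while you start from $MCR(n^{(p)};am)$ and reassemble $a$ transposed column-groups; incidentally, your constant $(p-1)C+rapC$ is the correct one, whereas the paper's stated $rap\rho+a(p-1)\rho$ carries a spurious factor of $a$ in the second term (harmless, since it is constant either way).
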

\begin{proof}
Let $V(G)=\{v_1,v_2,\ldots,v_m\}$ and denote by $U(aH_{n,p})$, the set $V(aH_{n,p})=\{u_1,u_2,\ldots,u_{anp}\}$, where the adjacencies are given in Section \ref{Intro}. If $n \equiv 0 \mod 2$ or $apm \equiv 1 \mod 2$, then by Theorem \ref{MCRnpq}, there is a magic column rectangle $MCR(n^{(ap)};m)$, say $N$, and we denote by $M$, the transpose of $N$. We then could partition $M$ into blocks of order $1 \times n$, so that the sum of entries in every block is constant, say $\rho$.

Let $\alpha$ be an $M$-matrix labeling of $G\circ aH_{n,p}$. We shall prove that $\alpha$ is a distance magic labeling. For every $(v_i,u_j)\in V(G\circ aH_{n,p})$, we obtain
\[w((v_i,u_j))=\sum_{v_{i'}\sim v_i,u\in V(G)} \alpha((v_{i'},u )) + \sum_{u_{j'}\sim u_j} \alpha((v_i,u_{j'} ))=rap\rho+a(p-1)\rho.\]
Therefore $G\circ aH_{n,p}$ has a distance magic labeling.
\end{proof}

The necessity of the sufficient conditions in Theorem \ref{lexicographic} is still unknown for general regular graphs. However in the case of a disjoint copy of complete multipartite graphs, the sufficient conditions are indeed necesarry for a distance magic labeling to exist.
\begin{theorem}
Let $m,n,c,d >1$. $aH_{m,p} \circ bH_{n,q}$ is distance magic if and only if $n \equiv 0 \mod 2$ or $mnabpq \equiv 1 \mod 2$.
\end{theorem}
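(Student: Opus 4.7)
The plan follows the sufficiency/necessity split used for the Cartesian and strong product theorems. For sufficiency, I would apply Theorem \ref{lexicographic} with $G := aH_{m,p}$: this is an $m(p-1)$-regular graph of order $amp$, so $G \circ bH_{n,q}$ is distance magic whenever $n$ is even or $bq \cdot amp$ is odd. Under the hypothesis $n \equiv 0 \pmod 2$ or $mnabpq \equiv 1 \pmod 2$, one of these two cases automatically holds, so sufficiency is immediate.

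For necessity, let $\alpha$ be a distance magic labeling of $aH_{m,p}\circ bH_{n,q}$, and write $V_i, V_{i,j}$ for the components and partite classes of $aH_{m,p}$ and $U_s, U_{s,t}$ for those of $bH_{n,q}$. The target is to show that the local sum
\[\rho_g := \sum_{h\in U_{s,t}}\alpha(g,h)\]
depends on neither $s$, $t$, nor $g$, so that the $bnq \times amp$ matrix $M$ defined by $M_{h,g} := \alpha(g,h)$, with rows grouped by the partite classes $U_{s,t}$, is a magic column rectangle $MCR(n^{(bq)};amp)$. Theorem \ref{MCRnpq} will then force $n$ to be even or all of $n, bq, amp$ to be odd, which is equivalent to $mnabpq \equiv 1 \pmod 2$.

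Independence of $\rho_g$ from $t$ for fixed $g,s$ comes from equating $w((g,h)) = w((g,h'))$ for $h\in U_{s,t}$ and $h'\in U_{s,t'}$: the $aH_{m,p}$-contributions coincide, while the $bH_{n,q}$-contributions differ only by swapping the partite classes $U_{s,t}$ and $U_{s,t'}$. Independence from $s$ then follows from a further comparison across distinct components of $bH_{n,q}$, using that each component already has $q$ partite classes of equal $\alpha$-sum. To promote $\rho_g$ to a global constant I would carry out three $g$-comparisons, mirroring the Cartesian and strong product proofs: for $g,g'$ in the same $V_{i,j}$ the claim is trivial (the $aH_{m,p}$-neighborhoods coincide); for $g\in V_{i,j}, g'\in V_{i,j'}$ in the same component, using $\sum_{h\in V(bH_{n,q})}\alpha(g'',h)=bq\rho_{g''}$ reduces $w((g,h))=w((g',h))$ to $(bqm - q + 1)(\rho_g - \rho_{g'})=0$, whose coefficient $q(bm-1)+1$ is strictly positive; and for $g,g'$ in distinct components of $aH_{m,p}$ the same bookkeeping produces the coefficient $bqm(p-1)+(q-1)>0$ since $p,q>1$.

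The principal obstacle is the bookkeeping in this chain of comparisons: one must track exactly how much of each weight $w((g,h))$ comes from the $aH_{m,p}$-direction versus the $bH_{n,q}$-direction and verify that every coefficient surviving the cancellations is strictly positive under the hypotheses $m,n,p,q > 1$. Once the global constancy of $\rho_g$ is secured, the identification of $M$ as an $MCR(n^{(bq)};amp)$ is immediate, and Theorem \ref{MCRnpq} finishes the proof.
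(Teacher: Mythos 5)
Your proposal is correct and follows essentially the same route as the paper: sufficiency via Theorem \ref{lexicographic} applied to $G=aH_{m,p}$, and necessity by showing that the partite-class sums $\sum_{h\in U_{s,t}}\alpha(g,h)$ are a global constant, so that the labeling matrix yields an $MCR(n^{(bq)};amp)$ and Theorem \ref{MCRnpq} forces $n$ even or $mnabpq$ odd. Your bookkeeping is organized a little differently (you fix $g$ and vary the partite class of $h$ directly, and you spell out the cross-component comparisons with the positive coefficients $q(bm-1)+1$ and $bqm(p-1)+q-1$, steps the paper leaves partly implicit), but the decomposition and the key lemmas are the same.
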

\begin{proof}
If $n \equiv 0 \mod 2$ or $mnabpq \equiv 1 \mod 2$, then, by Theorem \ref{lexicographic}, $aH_{m,p} \circ bH_{n,q}$ has a distance magic labeling.

Notice that $aH_{m,p} \circ bH_{n,q} \approx ab(H_{m,p}\circ bH_{n,q})$. Let $V(aH_{m,p} )=\{v_1,v_2,\ldots,v_{amp}\}$ and denote by $U(bH_{n,q})$, the set $V(bH_{n,q})=\{u_1,u_2,\ldots,u_{bnq}\}$ where the adjacencies are given in Section \ref{Intro}.

If $aH_{m,p} \circ bH_{n,q}$ is distance magic then there exists a distance magic labeling $\alpha$, which is an $M$-matrix labeling for a matrix $M$. Partition $M$ into $ab$ blocks of order $mp \times nq$ so that each block corresponds with one component of $aH_{m,p}\circ bH_{n,q}$.

Consider the block $B$ where $B_{i,j}=\alpha((v_i,u_j))$ for $1\leq i\leq mp$ and $1\leq j\leq nq$. Notice that
\[w((v_1,u_1))=w((v_2,u_1))=\ldots=w((v_m,u_1))\]
\[C+\sum_{u_i\in U_{1,j},j\neq 1}B_{1,i}=C+\sum_{u_i\in U_{1,j},j\neq 1}B_{2,i}=\ldots=C+\sum_{u_i\in U_{1,j},j\neq 1}B_{m,i}.\]
where $C=\sum_{(u,v)\in K} l((u,v))$ and $K=N((v_1,u_1))\cap N((v_2,u_1))\cap \ldots N((v_m,u_1))$, and so
\begin{equation} \tag{1}
\sum_{u_i\in U_{1,j},j\neq 1}B_{1,i}=\sum_{u_i\in U_{1,j},j\neq 1}B_{2,i}=\ldots=\sum_{u_i\in U_{1,j},j\neq 1}B_{m,i}.
\end{equation}
Applying similar procedures to vertices $(v_1,u_i),(v_2,u_i),\ldots,(v_m,u_i)$ where $i=n+1,2n+1,\ldots,(q-1)n+1$, we obtain
\begin{equation} \tag{2}
\sum_{u_i\in U_{1,j},j\neq 2}B_{1,i}=\sum_{u_i\in U_{1,j},j\neq 2}B_{2,i}=\ldots=\sum_{u_i\in U_{1,j},j\neq 2}B_{m,i}.
\end{equation}
\[\vdots\]
\begin{equation} \tag{q}
\sum_{u_i\in U_{1,j},j\neq q}B_{1,i}=\sum_{u_i\in U_{1,j},j\neq q}B_{2,i}=\ldots=\sum_{u_i\in U_{1,j},j\neq q}B_{m,i}.
\end{equation}
By adding all equations (1) up to (q), we obtain
\[(q-1)\sum_{i\in U_1} B_{1,i}=(q-1)\sum_{i\in U_1} B_{2,i}=\ldots=(q-1)\sum_{i\in U_1} B_{m,i},\]
or
\begin{equation} \tag{*}
\sum_{i\in U_1} B_{1,i}=\sum_{i\in U_1} B_{2,i}=\ldots=\sum_{i\in U_1} B_{m,i}
\end{equation}
By substracting equation (*) with each of the equations from (1) to (q), we obtain
\[\sum_{u_i\in U_{1,j}}B_{1,i}=\sum_{u_i\in U_{1,j}}B_{2,i}=\ldots=\sum_{u_i\in U_{1,j}}B_{m,i},\]
for $j = 1,2,\ldots, q$.
Consequently, $\sum_{u_i\in U_{1,j}}B_{k,i}$ is constant, say $\rho_1$, for $1\leq j\leq q, 1\leq k\leq m$.

Similarly, we could obtain $\sum_{u_i\in U_{1,j}}B_{k,i}$ is constant, say $\rho_l$, for $1\leq j\leq q, (i-1)m+1\leq k\leq lm$.
Now, we shall show that $\rho_1=\rho_2=\ldots=\rho_p$. Notice that
\[w((v_1,u_1))=w((v_{(j-1)m+1},u_1))\]
\[(q-1)\rho_1+mq\sum_{i=2}^{p}\rho_i=(q-1)\rho_j+mq\sum_{i=1,i\neq j}^{p}\rho_i\]
\[(q-1)\rho_1+mq\rho_j=(q-1)\rho_j+mq\rho_1\]
\[(q-1-mq)(\rho_1-\rho_j)=0\]
Since $mq+1>q$, then $\rho_1=\rho_j$ and $\rho_1=\rho_2=\ldots=\rho_p$.

Therefore the matrix $B$ can be partitioned into $mpq$ blocks of order $1\times n$, so that the sum of every block is $\rho_1$. Treating the other $ab-1$ blocks of order $mp\times nq$ similarly, we shall be able to partition $M$ into $abmpq$ matrices of order $1\times n$, such that the sum of every block is $\rho_1$. Consequently, $M^t$ can be partitioned into $abmpq$ matrices of order $n\times 1$, such that the sum of every block is $\rho_1$, which means that a $MCR(n^{(bq)};amp)$ exists. By Theorem \ref{MCRnpq}, we obtain $n \equiv 0 \mod 2$ or $mnabpq \equiv 1 \mod 2$.
\end{proof}

\begin{theorem} \label{nK1}
Let $m \geq 1$, $n>1$, and $G$ be an $r$-regular graph of order $m$. If
\begin{enumerate}
\item $n \equiv 0 \mod 2$, or
\item $mn \equiv 1 \mod 2$, or
\item $G$ is distance magic,
\end{enumerate}
then $G\circ nK_1$ is also distance magic.
\end{theorem}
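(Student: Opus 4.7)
The plan is to handle the three sufficient conditions separately, under a common guiding principle: since $nK_1$ has no internal edges, the only adjacencies in $G\circ nK_1$ run between different ``fibres'' $\{v_j\}\times\{1,\ldots,n\}$. Consequently, if I set $S_j := \sum_{l=1}^n \alpha((v_j,l))$, the weight at any vertex factors as $w((v_i,l)) = \sum_{v_j\sim v_i} S_j$. So whenever all $S_j$ coincide, the weight is automatically $rS$, independent of the structure of $G$. This observation will dispatch Cases (1) and (2); Case (3) requires a different construction that exploits the magic structure of $G$ itself.

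For Cases (1) and (2), I would first assume $m\geq 2$ and invoke Theorem \ref{MCRnpq} with parameters $(n,p,q)=(n,m,1)$. Its hypotheses---$n,p>1$ together with ``$n$ even or all of $n,p,q$ odd''---match exactly the conditions ``$n$ even'' or ``$mn$ odd'' in the current theorem, so an $MCR(n^{(m)};1)$ exists: a length-$mn$ column containing $\{1,\ldots,mn\}$ once each, partitioned into $m$ blocks of size $n$ each summing to $S=n(mn+1)/2$. Assigning the entries of the $j$-th block as the labels of $(v_j,1),\ldots,(v_j,n)$ gives $S_j=S$ for every $j$, and the opening observation then produces a distance magic labeling with constant $rS$. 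The degenerate case $m=1$ collapses $G\circ nK_1$ to $nK_1$, which is vacuously distance magic.

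For Case (3), I would take a distance magic labeling $\beta:V(G)\to\{1,\ldots,m\}$ of $G$ with constant $k$ and define
\[\alpha((v_i,l)) := (l-1)m + \beta(v_i).\]
This is manifestly a bijection onto $\{1,\ldots,mn\}$, and a direct computation gives
\[w((v_i,l)) \;=\; \sum_{v_j\sim v_i}\sum_{l'=1}^n\bigl[(l'-1)m + \beta(v_j)\bigr] \;=\; \frac{rmn(n-1)}{2} + nk,\]
which is independent of $(v_i,l)$.

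No serious obstacle is anticipated. The one subtle point is the $p>1$ requirement built into Theorem \ref{MCRnpq}, which is what forces the trivial separate treatment of $m=1$ in Cases (1) and (2); every other step is a direct verification once the fibre-sum reformulation $w((v_i,l)) = \sum_{v_j\sim v_i}S_j$ is in place.
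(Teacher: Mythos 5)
Your proposal is correct. For conditions (1) and (2) it follows essentially the paper's route: both arguments reduce the weight at $(v_i,l)$ to $\sum_{v_j\sim v_i}S_j$, where $S_j$ is the $j$-th fibre sum, and use a magic column rectangle to force all $S_j$ equal; the only difference is parameter bookkeeping --- the paper takes an $MCR(n^{(1)};m)$ and transposes it, which technically violates the hypothesis $p>1$ of Theorem \ref{MCRnpq}, whereas your choice $MCR(n^{(m)};1)$ satisfies the stated hypotheses at the price of splitting off the trivial $m=1$ case, which you handle. For condition (3) your route is genuinely different and, as written, cleaner. The paper decomposes the columns as $n=1+(n-1)$, takes an $MCR((n-1)^{(1)};m)$, say $O$, and sets $M=[P\ O^t]$ with $P_{i,1}=\beta(v_i)$; as printed this is not a bijection onto $\{1,\dots,mn\}$, since the entries of $P$ and of $O$ both start at $1$ (one must shift the entries of $O$ by $m$, which preserves constant column sums, so the construction is repairable), and it implicitly relies on $n$ being odd so that $n-1$ is even. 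Your layered labeling $\alpha((v_i,l))=(l-1)m+\beta(v_i)$ avoids both issues: it is manifestly bijective, needs no parity assumption on $n$, and the computation $w((v_i,l))=rmn(n-1)/2+nk$ checks out. The trade-off is only that the paper's version exhibits the labeling as another instance of the matrix-labeling machinery used throughout, while yours is self-contained and arguably the more robust argument.
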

\begin{proof}
Let $V(G)=\{v_1,v_2,\ldots,v_m\}$ and denote by $U(nK_1)$, the set $V(nK_1)=\{u_1,u_2,\ldots,u_{n}\}$.

If $n \equiv 0 \mod 2$ or $mn \equiv 1 \mod 2$, then by Theorem \ref{MCRnpq}, there exists an $MCR(n^{(1)};m)$, say $N$. Let $M=N^t$, then we could partition $M$ into blocks of order $1 \times n$, so that the sum of entries in every block is constant, say $\rho$. Let $\alpha$ be an $M$-matrix labeling of $G\circ nK_1$. For every $(v_i,u_j)\in V(G\circ nK_1)$, 
\[w((v_i,u_j ))=\sum_{v_{i'}\sim v_i,u\in V(G)} \alpha((v_{i'},u )) + \sum_{u_{j'}\sim u_j} \alpha((v_i,u_{j'} ))=r\rho.\] 
Thus $ w((v_i,u_j ))=r\rho$ is a constant and this means that $\alpha$ is a distance magic labeling.

If $n \equiv 1 \mod 2$, then $n-1 \equiv 0 \mod 2$. By Theorem \ref{MCRnpq}, there exists an $MCR((n-1)^{(1)};m)$, say $O$, such that the column sum is a constant, say $\varrho$. If $G$ is distance magic, let $\beta$ be a distance magic labeling of $G$ with magic constant ${\sf k}$. Let $P$ be a matrix of order $m\times 1$ where $P_{i,1}=\beta(v_i)$. Let $M=[P \ O^t]$ and $\beta'$ be an $M$-matrix labeling of $G\circ nK_1 $, then $w(v_i,u_j)={\sf k}+r\varrho$ for all $i$ and $j$. Therefore $G\circ nK_1 $ is also distance magic.
\end{proof}

The following theorem shows that rearanging a matrix $M$ corresponding to an $M$-matrix distance magic labeling for a graph $G$ could provide us a labeling for a graph other than $G$ but with the same order.
\begin{theorem}
Let $a \equiv 0 \mod b$, $n \geq 1$, and $G$ be a regular graph. If $aG \circ nK_1$ is distance magic then $bG \circ \frac{a}{b}(nK_1)$ is also distance magic.
\end{theorem}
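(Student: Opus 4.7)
Let $m=|V(G)|$ and write $t=a/b$. The goal is to take the distance magic $M$-matrix labeling of $aG\circ nK_1$ (with $M$ of size $am\times n$ carrying the entries $\{1,\dots,amn\}$) and reshape $M$ into a new matrix $M'$ of size $bm\times tn$ using the same entry set, whose induced $M'$-matrix labeling witnesses the distance magicness of $bG\circ\frac{a}{b}(nK_1)$.

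The first step is to record the structural isomorphism $sG\circ nK_1\cong s(G\circ nK_1)$ for any positive integer $s$. This holds because $nK_1$ is edgeless, so by the definition of the lexicographic product no edge of $sG\circ nK_1$ crosses between two components of $sG$. Indexing the rows of $M$ by pairs $(c,i)$ with $c\in\{1,\dots,a\}$ identifying the copy of $G$ inside $aG$ and $i\in\{1,\dots,m\}$ identifying a vertex of $G$, the open neighborhood of $(v_{(c,i)},u_j)$ in $aG\circ nK_1$ is $\{(v_{(c,i')},u_{j'}):i'\sim_G i,\ 1\le j'\le n\}$, and hence its weight equals $\sum_{i'\sim_G i}R_{c,i'}$, where $R_{c,i'}:=\sum_{j=1}^{n}M_{(c,i'),j}$. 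The distance magic hypothesis with magic constant $k$ is therefore equivalent to the statement that $\sum_{i'\sim_G i}R_{c,i'}=k$ for every $c$ and every $i\in V(G)$.

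Next, I would group the $a$ copies of $G$ inside $aG$ into $b$ super-copies of $t$ each, and build $M'$ by horizontally concatenating, within each super-copy, the $t$ corresponding row-blocks of $M$. Precisely, for $c'\in\{1,\dots,b\}$ and $i\in\{1,\dots,m\}$, take $M'_{(c',i),\cdot}$ to be the concatenation of the length-$n$ rows $M_{(c,i),\cdot}$ for $c=(c'-1)t+1,\dots,c't$. Then $M'$ has size $bm\times tn$, its entries form exactly $\{1,\dots,amn\}$ with each value appearing once, and its row sums satisfy $R'_{c',i}=\sum_{c=(c'-1)t+1}^{c't}R_{c,i}$.

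Finally, applying the same isomorphism on the other side, $bG\circ\frac{a}{b}(nK_1)\cong b(G\circ\frac{an}{b}K_1)$, the weight of $(v'_{(c',i)},u'_j)$ under the $M'$-matrix labeling becomes $\sum_{i'\sim_G i}R'_{c',i'}=\sum_{c=(c'-1)t+1}^{c't}\sum_{i'\sim_G i}R_{c,i'}=tk$, which is independent of $c'$, $i$, and $j$. I do not expect a genuine obstacle: the argument is purely a reindexing, the magic condition lives entirely at the level of row sums of $M$, and the magic constant rescales cleanly from $k$ to $tk$. The only delicate point is stating the isomorphism $sG\circ nK_1\cong s(G\circ nK_1)$ precisely enough that the row-to-component bookkeeping is unambiguous on both sides.
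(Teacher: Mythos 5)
Your proposal is correct and follows essentially the same route as the paper: both proofs cut $M$ into the $a$ row-blocks of order $m\times n$ corresponding to the copies of $G$, rearrange them into a $b\times\frac{a}{b}$ grid of blocks, and observe that the resulting matrix labeling of $bG\circ\frac{a}{b}(nK_1)$ has constant weight $\frac{a}{b}{\sf k}$. Your write-up is merely more explicit about the indexing and about the isomorphism $sG\circ nK_1\cong s(G\circ nK_1)$, which the paper leaves implicit.
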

\begin{proof}
Let $\alpha$ be a distance magic labeling for $aG \circ nK_1$, with magic constant $\sf k$. $\alpha$ is an $M$-matrix labeling, for a matrix $M$. Let $M=[B_1^t \ B_2^t$ \ldots $B_a^t]^t$ where $B_1$,\ldots,$B_a$ are blocks of order $m \times n$. We utilise those blocks to create a matrix $N$ of order $bm \times \frac{a}{b}n$ such that if $N$ is partitioned into blocks of order $a\times n$, then the blocks are $B_1$,\ldots,$B_a$. If $\alpha'$ is an $N$-matrix labeling of $bG\circ \frac{a}{b}(nK_1)$, then $w(v)=\frac{a}{b}\sf k$ for all $v\in V(bG \circ \frac{a}{b}(nK_1))$. Therefore $bG \circ \frac{a}{b}(nK_1)$ is also distance magic.
\end{proof}

\section{Distance Magic Cronecker Product Graphs}

The last product to be dealt with is the cronecker product. We shall start with a relation between distance magic labelings for lexicographic product to those for cronecker product.

\begin{theorem} \label{lextocron}
Let $a \geq 1$, $n,p>1$, and $G$ be a regular graph. Then the following statements are equivalent:
\begin{description}
  \item[(i)] $apG \circ nK_1$ is distance magic
  \item[(ii)] $aH_{n,p} \otimes G$ is distance magic
\end{description}
\end{theorem}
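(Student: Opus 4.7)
My plan is to establish a natural bijection between the two vertex sets that carries distance magic labelings across the equivalence, and then to reduce the weight-constancy condition on each side to a statement about a common auxiliary function. Using the paper's indexing, write $V(apG)$ as $\{(g,i,j) : g \in V(G),\ i \in [a],\ j \in [p]\}$, where $(g,i,j)$ is the copy of $g$ in the $(i,j)$-th component of $apG$, and write $V(aH_{n,p})$ so that $V_{i,j} = \{u_{i,j,x} : x \in [n]\}$. Both $V(apG \circ nK_1)$ and $V(aH_{n,p} \otimes G)$ are then in bijection with $V(G)\times[a]\times[p]\times[n]$ via $((g,i,j),x) \longleftrightarrow (u_{i,j,x},g)$, so a labeling on one side transfers to the other by $\beta(u_{i,j,x},g) = \alpha((g,i,j),x)$, and this clearly preserves bijectivity onto $\{1,\ldots,apmn\}$.

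Next I would introduce the auxiliary quantities
\[
S(g,i,j) \;=\; \sum_{x=1}^{n}\alpha((g,i,j),x) \;=\; \sum_{x=1}^{n}\beta(u_{i,j,x},g),
\qquad
A(g,i,j) \;=\; \sum_{g'\sim_{G} g} S(g',i,j).
\]
A direct expansion of the neighborhoods in each product (using the adjacency rules set out in Section \ref{Intro}) shows that the weight at $((g,i,j),x)$ in $apG\circ nK_1$ equals $A(g,i,j)$, while the weight at $(u_{i,j,x},g)$ in $aH_{n,p}\otimes G$ equals $\sum_{j'\neq j} A(g,i,j')$. In particular, neither weight depends on $x$ once the labeling is transferred, so conditions (i) and (ii) become, respectively, ``$A$ is a constant function on $V(G)\times[a]\times[p]$'' and ``$\sum_{j'\neq j}A(g,i,j')$ is a constant function of $(g,i,j)$.''

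The direction (i) $\Rightarrow$ (ii) is then immediate: if $A \equiv C_1$, then $\sum_{j'\neq j}A(g,i,j') = (p-1)C_1$ is constant. For (ii) $\Rightarrow$ (i), suppose $\sum_{j'\neq j}A(g,i,j') = C_2$ for all $g,i,j$. Fixing $g,i$ and subtracting this equation for two indices $j,j_0\in[p]$ cancels the overlapping terms and yields $A(g,i,j)=A(g,i,j_0)$, so $A(g,i,\cdot)$ is constant on $[p]$; feeding this back in gives $(p-1)A(g,i,j)=C_2$, so $A(g,i,j)=C_2/(p-1)$, independent of $g,i,j$. The only delicate point, and the one I expect to be the main obstacle to extending the argument, is precisely this subtract-to-cancel step: it requires $p>1$ so that division by $p-1$ is legal, which is exactly the hypothesis. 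No further properties of the regular graph $G$ are needed beyond those already assumed.
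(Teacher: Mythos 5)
Your proposal is correct and follows essentially the same route as the paper: both transfer the labeling across the natural bijection between the two vertex sets (the paper does this by transposing and restacking the $m\times n$ blocks of the $M$-matrix labeling), and both identify the same key quantity --- your $A(g,i,j)$, which in the paper appears as $\sum_{u\sim u_j,\,v_i\in V_k}B_{i,j}$ --- showing that the weight on the lexicographic side equals $A(g,i,j)$ while the weight on the Kronecker side equals $\sum_{j'\neq j}A(g,i,j')$, with the reverse direction recovered by cancellation using $p>1$. Your functional notation is a cleaner (and in places more careful) repackaging of the paper's block-matrix argument, but the underlying idea is identical.
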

\begin{proof}
Let $G$ be a $r$-regular graph of order $m$.

\textbf{(i)$\Rightarrow$(ii)} Let $\alpha$ be a distance magic labeling for $apG\circ nK_1$ with magic constant $\sf k$, which is an $M$-matrix labeling for a matrix $M$. We partition the matrix $M$ into $ap$ blocks of order $m \times n$, say $B_1,\ldots,B_{ap}$.

Now, consider the blocks $B_1^t,\ldots,B_{ap}^t$ and we utilise these blocks to create a matrix $N$ of order $anp \times m$ such that if the matrix $N$ is partitioned into blocks of order $n \times m$, then the blocks are $B_1^t,\ldots,B_{ap}^t$. Let $\alpha'$ be an $N$-matrix labeling for $aH_{n,p}\otimes G$. Then $w(v)=(p-1)r\sf k$ for all $v \in V(aH_{n,p}\otimes G)$. Therefore, $aH_{n,p}\otimes G$ is distance magic.

\textbf{(ii)$\Rightarrow$(i)} Let $V(aH_{n,p})=\{ v_1,v_2,\ldots,v_{anp}\} $ and denote by $U(G)$ the set $V(G)=\{u_1,u_2,\ldots,u_m\}$ where the adjacencies are given in Section \ref{Intro}. Notice that $aH_{n,p} \otimes G \approx a(H_{n,p} \otimes G)$.

Since $aH_{n,p}\otimes G$ is distance magic, then there exists a distance magic labeling $\alpha$ with magic constant $\sf k$, which is an $M$-matrix labeling for a matrix $M$. Partition $M$ into $a$ blocks of order $np\times m$ so that each block corresponds to one component of $aH_{n,p}\otimes G$.

Consider a block $B$ where $B_{i,j}=l((v_i,u_j))$ for $1\leq i\leq np$ and $1\leq j\leq m$. Notice that for $u\in U(G)$, $\sum_{i=1}^pw((v_{(i-1)n+1)},u))=(p-1)\sum_{u\sim u_j} \sum_{i=1}^{np} B_{i,j}$. Since $\sum_{i=1}^pw((v_{(i-1)n+1)},u))=\sum_{i=1}^p\sf k=p\sf k$, we get $p {\sf k}=(p-1)\sum_{u\sim u_j} \sum_{i=1}^{np} B_{i,j}$, or $\sum_{u\sim u_j} \sum_{i=1}^{np} B_{i,j}=p{\sf k}/(p-1)$.

Let $V_1,V_2,\ldots,V_p$ be the partition of $\{v_1,v_2,\ldots,v_{np}\}$ such that each $V_i$ induces a graph $nK_1$ for all $i$.
For $v\in V_k$, $w((v,u))=\sum_{u\sim u_j} \sum_{i=1}^{np} B_{i,j}-\sum_{u\sim u_j,v_i\in V_k} B_{i,j}$. Therefore
${\sf k}=\sum_{u\sim u_j} \sum_{i=1}^{np} B_{i,j}-\sum_{u\sim u_j,v_i\in V_k} B_{i,j}$ or 
$\sum_{u\sim u_j,v_i\in V_k} B_{i,j}=p{\sf k}/(p-1)-{\sf k}={\sf k}/(p-1)$.	

Now, let the other $(a-1)$ blocks be $B_2,B_3,\ldots,B_a$ and we partition each block into blocks of order $n\times m$. All together, we obtain $ap$ blocks of order $n\times m$, say $A_1,A_2,\ldots,A_{ap}$. Let $N=[A_1 \ A_2 \ A_3 \ldots A_{ap}]^t$ and $\alpha'$ be an $N$-matrix labeling for $apG\circ nK_1$. Then $w(v)={\sf k}/(p-1)$ for all $v\in V(apG\circ nK_1)$. Therefore, $apG\circ nK_1 $ is distance magic.
\end{proof}

Combining Theorems \ref{nK1} and \ref{lextocron}, we obtain
\begin{corollary} \label{tpG}
Let $n>1$, $tpm>1$, and $G$ be a regular graph of order $m$. If $n \equiv 0 \mod2$ or $ntpm \equiv 1 \mod2$ or $tpG$ is distance magic then $tH_{n,p} \otimes G$ is also distance magic.
\end{corollary}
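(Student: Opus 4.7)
The plan is to combine the two quoted theorems in the most direct way possible: first use Theorem \ref{lextocron} to turn the Kronecker-product statement into a lexicographic-product statement about $nK_1$, and then apply Theorem \ref{nK1} to read off the three sufficient conditions.

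\medskip

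\textbf{Step 1 (reduction via Theorem \ref{lextocron}).} Applying Theorem \ref{lextocron} with the parameter $a$ set equal to $t$ (the other parameters $n, p$ and the regular graph $G$ being taken straight from the statement), we get the equivalence
\[
tH_{n,p}\otimes G \text{ is distance magic} \iff (tp)G \circ nK_1 \text{ is distance magic}.
\]
So it suffices to show that $tpG\circ nK_1$ is distance magic whenever any of the three listed conditions hold.

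\medskip

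\textbf{Step 2 (apply Theorem \ref{nK1} to $tpG$).} Observe that the graph $H := tpG$ is a disjoint union of $tp$ copies of the $r$-regular graph $G$, hence itself $r$-regular, with order $M := tpm$. The hypothesis $tpm>1$ gives $M\geq 1$ (in fact $M\geq 2$), and $n>1$ is assumed, so the standing hypotheses of Theorem \ref{nK1} are met. I then match the three alternative hypotheses of Corollary \ref{tpG} against the three sufficient conditions in Theorem \ref{nK1}:
\begin{itemize}
\item If $n\equiv 0\pmod 2$, condition (1) of Theorem \ref{nK1} applies to $H\circ nK_1$.
\item If $ntpm\equiv 1\pmod 2$, then $Mn = (tpm)n\equiv 1\pmod 2$, which is exactly condition (2) of Theorem \ref{nK1} applied to $H$.
\item If $tpG$ is distance magic, then $H$ is distance magic, i.e.\ condition (3) of Theorem \ref{nK1}.
\end{itemize}
In each case Theorem \ref{nK1} yields that $H\circ nK_1 = tpG\circ nK_1$ is distance magic, and then Step 1 promotes this to distance magicness of $tH_{n,p}\otimes G$.

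\medskip

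\textbf{Anticipated obstacles.} There are essentially none: the entire content of the corollary is a bookkeeping composition of the two preceding results, so the only thing to verify carefully is that the parameter dictionary matches (in particular that the regularity and order hypotheses of Theorem \ref{nK1} are inherited by $tpG$, and that $t\geq 1$ so that Theorem \ref{lextocron} is legitimately applicable). Both checks are immediate from $tpm>1$ together with the convention that $t,p,m$ are positive integers, so the proof will be just a few lines.
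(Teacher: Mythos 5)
Your proof is correct and is exactly the argument the paper intends: the paper offers no written proof beyond the phrase ``Combining Theorems \ref{nK1} and \ref{lextocron}, we obtain,'' and your two steps (reduce $tH_{n,p}\otimes G$ to $tpG\circ nK_1$ via Theorem \ref{lextocron} with $a=t$, then apply Theorem \ref{nK1} to the $r$-regular graph $tpG$ of order $tpm$) spell out precisely that combination. The only pedantic caveat, inherited from the paper's own statement, is that Theorem \ref{lextocron} formally requires $p>1$, which the corollary's hypotheses do not explicitly guarantee.
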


In the following theorems, we shall consider distance magic labelings of graphs which are cronecker products of disjoint copies of complete multipartite graphs with other graphs.
\begin{theorem}
Let $m\geq 2$, $n>1$, $p \geq 1$, and $\prod_{i=1}^{m}a_i\equiv 1 \mod 2$. $a_1H_{n,p} \otimes a_2H_{n,p} \otimes \ldots \otimes a_mH_{n,p}$ is distance magic if and only if  $n \equiv 0 \mod 2$ or $n,p \equiv \mod 2$.
\end{theorem}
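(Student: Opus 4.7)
The plan is to establish each direction separately, with the heavy lifting done by Corollary \ref{tpG} (for sufficiency) and Theorem \ref{magicconstant} (for necessity).

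For the sufficiency direction, I would write the product as $a_1 H_{n,p} \otimes G$, where $G := a_2 H_{n,p}\otimes\cdots\otimes a_m H_{n,p}$. Since each $a_iH_{n,p}$ is $n(p-1)$-regular and the Kronecker product of regular graphs is regular (with regularity equal to the product of the individual regularities), $G$ is a regular graph of order $|V(G)|=\prod_{i=2}^m a_i\cdot n^{m-1}p^{m-1}$. I would then verify that one of the hypotheses of Corollary \ref{tpG} (applied with $t=a_1$) always holds under the assumption $n\equiv 0\pmod 2$ or $n,p\equiv 1\pmod 2$: if $n$ is even, the first hypothesis is immediate; if $n$ and $p$ are both odd, then since $\prod_i a_i$ is odd the product $n\cdot a_1\cdot p\cdot |V(G)|$ is a product of odd factors and the second hypothesis holds. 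In either case Corollary \ref{tpG} delivers a distance magic labeling of $a_1H_{n,p}\otimes G$.

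For the necessity direction, the product is $((p-1)n)^m$-regular on $N=\prod_{i=1}^m a_i\cdot n^m p^m$ vertices, so by Theorem \ref{magicconstant} any distance magic labeling must have magic constant $k=((p-1)n)^m(N+1)/2$, which must be a positive integer. My plan is to rule out the single forbidden parity combination, namely $n$ odd and $p$ even (every other parity of $(n,p)$ already satisfies the claimed condition). Under $n$ odd, $p$ even, and $\prod_i a_i$ odd, the factor $((p-1)n)^m$ is a product of odd numbers and hence odd, while $p^m$ is even, so $N$ is even and $N+1$ is odd; thus $k$ is half an odd integer, which contradicts integrality of $k$.

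I do not anticipate any serious obstacles: both directions reduce cleanly to previously established tools, and the only careful bookkeeping consists in verifying that an iterated Kronecker product of regular graphs is itself regular (so that $G$ qualifies as input to Corollary \ref{tpG}) and a short parity computation in the magic-constant formula.
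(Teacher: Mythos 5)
Your proposal is correct and follows essentially the same route as the paper: sufficiency via Corollary \ref{tpG} applied to the decomposition $a_1H_{n,p}\otimes G$ with $G$ the remaining (regular) Kronecker factor, and necessity via the integrality of the magic constant $\frac{n^m(p-1)^m\bigl(n^mp^m\prod_{i=1}^{m}a_i+1\bigr)}{2}$ from Theorem \ref{magicconstant}, ruling out the case $n$ odd and $p$ even. Your write-up is in fact slightly more explicit than the paper's, which leaves the regularity of the iterated product and the parity computation to the reader.
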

\begin{proof}
Let $G=a_1H_{n,p} \otimes a_2H_{n,p} \otimes \ldots \otimes a_mH_{n,p}$. If $G$ is distance magic then by Theorem \ref{magicconstant}, the magic constant is $k'=\frac{n^m(p-1)^m(n^mp^m\prod_{i=1}^{m}a_i+1)}{2}$, which enforces $n \equiv 0 \mod 2$ or $n,p \equiv 1 \mod 2$.

If $n \equiv 0 \mod 2$ or $n,p \equiv 1 \mod 2$ then by Corollary \ref{tpG}, $G$ has a distance magic labeling.
\end{proof}

\begin{theorem} \label{HnpKm}
Let $m\geq 1$, $n,p>1$ and $ab \equiv 1 \mod2$. $aH_{n,p} \otimes bK_m$ is distance magic if and only if  $n \equiv 0 \mod 2$ or $n,p,m \equiv 1 \mod 2$.
\end{theorem}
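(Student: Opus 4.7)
The plan is to prove the two directions separately. For sufficiency, I invoke Corollary~\ref{tpG} with $t=a$ and $G=bK_m$, viewing $bK_m$ as an $(m-1)$-regular graph of order $bm$; the corollary's hypothesis ``$n\equiv 0\pmod 2$ or $ntpm\equiv 1\pmod 2$'' becomes ``$n\equiv 0\pmod 2$ or $napbm\equiv 1\pmod 2$.'' Because $ab$ is odd, the second alternative is equivalent to $n,p,m$ all being odd, which matches the sufficient half of the stated characterization exactly.

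For necessity, suppose $aH_{n,p}\otimes bK_m$ is distance magic (the case $m=1$ is vacuous, since the tensor product is then edgeless). The implication (ii)$\Rightarrow$(i) of Theorem~\ref{lextocron} produces a distance magic labeling of $apb\,K_m\circ nK_1$, and since $K_m\circ nK_1\cong H_{n,m}$ for $m\geq 2$, this graph is $apb\,H_{n,m}$. I would then unpack the magic condition component by component. Writing $T_c$ for the sum of labels on the $c$-th copy of $H_{n,m}$ and $S_{j,c}$ for the sum on its $j$-th partite set, the magic condition at any vertex of $V_{j,c}$ reads $T_c-S_{j,c}={\sf k}'$; this simultaneously forces $S_{1,c}=\cdots=S_{m,c}$ within every copy and, after summing over $j$, forces $T_c=m{\sf k}'/(m-1)$, which is independent of $c$. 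Consequently all $mapb$ partite sets share the common sum $n(napbm+1)/2$, which is exactly the structure of a magic column rectangle $MCR(n^{(m)};apb)$.

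Theorem~\ref{MCRnpq} then says that such an $MCR$ exists only when $n$ is even or $n,m,apb$ are all odd; using once more that $ab$ is odd, the latter condition becomes ``$n,m,p$ all odd,'' which completes the necessary half. The main obstacle I anticipate is the bookkeeping in the middle paragraph: one must see that the per-component total $T_c$ is forced to be independent of $c$ even though $apb\,H_{n,m}$ is disconnected, so that the distance magic labeling genuinely produces the \emph{uniform} sums required of an $MCR$. Once that uniformity is established, the reduction is immediate and Theorem~\ref{MCRnpq} does the rest.
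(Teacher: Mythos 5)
Your proof is correct, and its necessity half takes a genuinely different route from the paper's. The sufficiency direction coincides with the paper: both of you simply invoke Corollary~\ref{tpG} and use $ab$ odd to rewrite the parity condition. For necessity, the paper stays inside $aH_{n,p}\otimes bK_m$ itself: it treats the labeling as an $M$-matrix labeling, equates the weights $w((v_1,u_1))=w((v_1,u_k))$ for $2\le k\le m$, and runs the add-all-equations-then-subtract manipulation to show every partite-set sum in every block is a common constant $\rho$, arriving at an $MCR(n^{(ap)};bm)$. You instead push the labeling through Theorem~\ref{lextocron} (ii)$\Rightarrow$(i) to obtain a distance magic labeling of $apb(K_m\circ nK_1)\cong apb\,H_{n,m}$, where the weight at a vertex of the $j$-th partite set of component $c$ is simply $T_c-S_{j,c}$; the magic condition then yields equal partite-set sums in two lines, giving an $MCR(n^{(m)};apb)$. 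Both reductions feed into Theorem~\ref{MCRnpq} and, since $ab$ is odd, produce the identical condition ``$n$ even or $n,p,m$ all odd.'' Your detour buys a much shorter computation, and the uniformity-across-components worry you flag is settled exactly as you suggest: the magic constant ${\sf k}'$ is global over the disconnected graph, which forces $T_c=m{\sf k}'/(m-1)$ for every $c$; the cost is leaning on Theorem~\ref{lextocron} as a black box where the paper argues directly. One small caveat, shared with the paper: the case $m=1$ is not quite ``vacuous'' --- the product is then edgeless, so every weight is an empty sum and the graph is trivially distance magic, which would contradict the ``only if'' direction unless one reads the theorem as implicitly assuming $m\ge 2$ (the paper's own equations for $2\le k\le m$ are likewise empty there, so this is a defect of the statement rather than of your argument).
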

\begin{proof}
If $n \equiv 0 \mod 2$ or $n,p,m \equiv 1 \mod 2$, by Corollary \ref{tpG}, $aH_{n,p} \otimes bK_m$ is distance magic.

Let $V(aH_{n,p})=\{v_1,v_2,\ldots,v_{anp}\}$ and denote by $U(bK_m)$, the set $V(bK_m)=\{u_1,u_2,\ldots,u_m\}$ where the adjacencies are given in Section \ref{Intro}. Notice that $aH_{n,p} \otimes bK_m \approx ab(H_{n,p} \otimes K_m)$. If $aH_{n,p}\otimes bK_m$ is distance magic, then there is a distance magic labeling $\alpha$ which is an $M$-matrix labeling, for a matrix $M$. Partition $M$ into $ab$ blocks of order $np \times m$ so that each block corresponds with one component of $aH_{n,p}\otimes bK_m$.

Consider the block $B$ where $B_{i,j}=\alpha((v_i,u_j))$ for $1\leq i\leq np$ and $1\leq j\leq m$. Notice that
\[w((v_1,u_1))=w((v_1,u_k)), 2\leq k\leq m,\]
\[C+\sum_{v_i\in V_{1,j},j\neq 1}B_{i,1}=C+\sum_{v_i\in V_{1,j},j\neq 1}B_{i,k},\]
where $C=\sum_{(u,v)\in K} \alpha((u,v))$ and $K=N((v_1,u_1))\cap N((v_1,u_k))$.
Thus,
\begin{equation} \tag{1}
\sum_{v_i\in V_{1,j},j\neq 1}B_{i,1}=\sum_{v_i\in V_{1,j},j\neq 1}B_{i,k}
\end{equation}
Similarly,
\begin{equation} \tag{2}
\sum_{v_i\in V_{1,j},j\neq 2}B_{i,1}=\sum_{v_i\in V_{1,j},j\neq 2}B_{i,k}
\end{equation}
\[\vdots\]	
\begin{equation} \tag{p}
\sum_{v_i\in V_{1,j},j\neq p}B_{i,1}=\sum_{v_i\in V_{1,j},j\neq p}B_{i,k}
\end{equation}

By adding equations (1) to (p), we obtain $(p-1)\sum_{i\in V_1} B_{i,1}=(p-1)\sum_{i\in V_1} B_{i,k}$, or
\begin{equation} \tag{*}
\sum_{i\in V_1} B_{i,1}=\sum_{i\in V_1} B_{i,k}
\end{equation}
By substracting equation (*) with each equation from (1) up to (p), we obtain $p$ equations:
\[\sum_{v_i\in V_{1,j}}B_{i,1}=\sum_{v_i\in V_{1,j}}B_{i,k}, j=1, \ldots, p.\]
Consequently, $\sum_{v_i\in V_{1,j}}B_{i,k}$ is constant, say $\rho$ for $1 \leq j \leq p, 1 \leq k \leq m$.

Thus, the matrix $B$ can be partitioned into $pm$ blocks of order $np\times m$, so that the sum of entries in every block is $\rho$. Similar results also hold for the other $ab-1$ blocks of order $np\times m$. In this case, $M$ can be partitioned into $apbm$ matrices of order $n\times 1$, such that the sum of entries in every block is $\rho$. This means that, $MCR(n^{(ap)},bm)$ exists, and so by Theorem \ref{MCRnpq}, we obtain $n \equiv 0 \mod 2$ or $npm \equiv 1 \mod 2$.
\end{proof}

\begin{theorem}
Let $a,b,n,p \geq 1$ and $m\geq 3$.
$aH_{n,p}\otimes bC_m$ is distance magic if and only if $n\equiv 0 \mod 2$ or $anpbm\equiv 1 \mod 2$.
\end{theorem}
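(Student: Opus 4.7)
The sufficient direction is immediate from Corollary \ref{tpG}: applying it with $t=a$ and $G=bC_m$ (a $2$-regular graph of order $bm$), the hypothesis ``$n\equiv 0\pmod 2$ or $napbm\equiv 1\pmod 2$'' is exactly what the corollary requires (with its $m$ played here by $bm$), producing a distance magic labeling of $aH_{n,p}\otimes bC_m$.

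For the converse I would mirror the scheme used in the proof of Theorem \ref{HnpKm}. Use the factorisation $aH_{n,p}\otimes bC_m\approx ab(H_{n,p}\otimes C_m)$ and let $\alpha$ be a distance magic labeling with magic constant $\sf k$, realised as an $M$-matrix labeling for a matrix $M$ of order $anp\times bm$. Partition $M$ into $ab$ blocks $B$ of order $np\times m$, one per component, and in a fixed block index rows by the vertices of the corresponding copy of $H_{n,p}$ with partite sets $V_{1,1},\dots,V_{1,p}$ and columns by $u_0,\dots,u_{m-1}$ (indices in $\mathbb{Z}_m$). Writing $s^{(c)}_l=\sum_{v_i\in V_{1,c}}B_{i,l}$ and $T_l=\sum_{c=1}^{p}s^{(c)}_l$, and noting that in $C_m$ each $u_j$ has exactly the two neighbours $u_{j-1},u_{j+1}$, the distance magic condition at $(v_i,u_j)$ for $v_i\in V_{1,c}$ becomes
\[
\bigl(T_{j-1}-s^{(c)}_{j-1}\bigr)+\bigl(T_{j+1}-s^{(c)}_{j+1}\bigr)={\sf k},\qquad j\in\mathbb{Z}_m.
\]
Summing these identities over $c=1,\dots,p$ and using $\sum_c s^{(c)}_l=T_l$ yields $(p-1)(T_{j-1}+T_{j+1})=p\,{\sf k}$, and substituting back gives the uniform relation
\[
s^{(c)}_{j-1}+s^{(c)}_{j+1}={\sf k}/(p-1)=:K'
\]
for every $c\in\{1,\dots,p\}$ and every $j\in\mathbb{Z}_m$.

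The next step, which is the main obstacle, is to extract from the two-step recursion $s^{(c)}_l+s^{(c)}_{l+2}=K'$ on $\mathbb{Z}_m$ the conclusion $s^{(c)}_l=K'/2$ for every $c$ and $l$. Iterating the recursion gives $s^{(c)}_{l+4}=s^{(c)}_l$, so $s^{(c)}$ is $\gcd(4,m)$-periodic: for $m$ odd this period is $1$, and the pairing $2s^{(c)}_l=K'$ delivers the desired value; for $m\equiv 2\pmod 4$ the two parity classes each form a single orbit under $l\mapsto l+4$, on which the same pairing forces the common value to be $K'/2$. The case $m\equiv 0\pmod 4$ is delicate, since the recursion leaves one free parameter in each parity class, and closing the argument requires an additional ingredient, for instance comparing the weight conditions across the $b$ disjoint copies of $C_m$ or exploiting the bipartiteness of $C_m$ (together with the non-bipartiteness of $H_{n,p}$ when $p\ge 3$) to couple the partite sums on the two sides of the bipartition. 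Once $s^{(c)}_l=K'/2$ is secured, the block $B$ partitions into $pm$ sub-blocks of size $n\times 1$ of common sum $K'/2$, and running the same argument on each of the remaining $ab-1$ component blocks assembles $M$ into a magic column rectangle $MCR(n^{(ap)};bm)$; Theorem \ref{MCRnpq} then forces $n\equiv 0\pmod 2$ or $napbm\equiv 1\pmod 2$, completing the argument.
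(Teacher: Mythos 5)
Your reduction is essentially the one the paper uses: the paper first transfers the problem to $apbC_m\circ nK_1$ via Theorem \ref{lextocron} and then studies the row sums of the $m\times n$ blocks, which satisfy exactly your two-step recursion around the cycle; your direct computation on the tensor product with the partial sums $s^{(c)}_l$ reaches the same recursion $s^{(c)}_{j-1}+s^{(c)}_{j+1}={\sf k}/(p-1)$ without the detour (at the cost of dividing by $p-1$, so you need $p>1$ — an assumption the paper also makes implicitly, since Theorem \ref{lextocron} requires it). Your assembly of an $MCR(n^{(ap)};bm)$ at the end is in fact slightly cleaner than the paper's $MCR(n^{(1)};apbm)$, which falls outside the hypotheses of Theorem \ref{MCRnpq}. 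The sufficiency direction is handled identically in both.

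The gap you flag at $m\equiv 0\pmod 4$ is genuine, and none of the rescue strategies you sketch can close it, because the statement itself fails there. The recursion $x_l+x_{l+2}=K'$ on $\mathbb{Z}_m$ pins down all values only when $\gcd(4,m)\leq 2$; for $4\mid m$ each parity class keeps a free parameter, and that freedom is realized by genuine labelings. Concretely, take $a=b=1$, $p=2$, $n=3$, $m=4$, so $H_{3,2}\otimes C_4=K_{3,3}\otimes C_4$. Every vertex weight depends only on the eight triple-sums $S_{A,i},S_{B,i}$ ($i\in\mathbb{Z}_4$, $A,B$ the partite sets of $K_{3,3}$) via $S_{B,i-1}+S_{B,i+1}$ and $S_{A,i-1}+S_{A,i+1}$; assigning $S_{A,1}=\{1,2,3\}$, $S_{A,3}=\{22,23,24\}$, $S_{A,2}=\{4,5,6\}$, $S_{A,4}=\{19,20,21\}$, $S_{B,1}=\{7,8,9\}$, $S_{B,3}=\{16,17,18\}$, $S_{B,2}=\{10,11,12\}$, $S_{B,4}=\{13,14,15\}$ makes every weight equal to $75$, a distance magic labeling with $n=3$ odd and $anpbm=24$ even. (Equivalently, $2C_4\circ 3K_1=2K_{6,6}$ is distance magic.) This contradicts the ``only if'' direction. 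The paper's proof commits the error silently: from $b_j=b_{j+4}$ it asserts $b_1=b_3=\cdots=b_{m-1}$ for all even $m$, which is valid only for $m\equiv 2\pmod 4$. So your write-up is correct and complete for $m\not\equiv 0\pmod 4$, and correctly isolates — but cannot repair, because it is irreparable — the case the paper gets wrong.
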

\begin{proof}
If $n\equiv 0 \mod 2$ or $anpbm\equiv 1 \mod 2$, then by Theorem \ref{tpG}, $aH_{n,p}\otimes bC_m$ is distance magic.

If $aH_{n,p}\otimes bC_m$ is distance magic, then by Theorem \ref{lextocron}, $apbC_m \circ nK_1$ is also distance magic. Let $V(apbC_m)=\{v_1,\ldots,v_{apbm}\}$ and $V(nK_1)=\{u_1,\ldots,u_n\}$. Let $\alpha$ be a distance magic labeling of $apbC_m \circ nK_1$, with magic constant $\sf k$. Suppose that $\alpha$ is an $M$-matrix labeling, for a matrix $M$. Let us partition $M$ into $apb$ blocks of order $m\times n$. Let $B$ be one of those blocks such that $B_{i,j}=\alpha(v_i,u_j)$ where $1\leq i\leq m$ and $1\leq j\leq n$ and $b_i=\sum_{j=1}^{n}B_{i,j}$. 

Now, consider arbitrary $u\in V(nK_1)$. For $m=3$, $w((v_1,u))=w((v_2,u))=w((v_3,u))$ or $b_2+b_3=b_1+b_3=b_1+b_2$, which leads to $b_1=b_2=b_3$.

For $m\geq 4$ and $ 2\leq i\leq m-3$, we obtain $w((v_i,u))=w((v_{i+2},u))$, and so $b_{(i-1)\mod m}=b_{(i+3)\mod m}$.	
On the other hand, $w((v_m,u))=w((v_2,u))$, which leads to $b_{m-1}=b_3$.
Additionally, $w((v_{m-1},u))=w((v_1,u))$, or
$b_{m-2}=b_2$ and $w((v_{m-2},u))=w((v_m,u))$, or
$b_{m-3}=b_1$.

Thus, when $m$ is odd, we obtain $b_1=b_2=\ldots=b_m$, and so ${\sf k}=w((v_2,u))=b_1+b_3=2b_1$ or $b_1={\sf k}/2$.
Meanwhile, when $m$ is even, $b_1=b_3=b_5=\ldots=b_{m-1}$ and $b_2=b_4=b_6=\ldots=b_{m}$.
Notice that ${\sf k}=w((v_2,u))=b_1+b_3=2b_1$ or $b_1={\sf k}/2$, and ${\sf k}=w((v_3,u))=b_2+b_4=2b_2$ or $b_2={\sf k}/2$.

We conclude that every row sum of the matrix $B$ is ${\sf k}/2$. Similarly, we could obtain that every row sum of all the other $apb-1$ blocks is also ${\sf k}/2$. This means that $M^t$ is an $MCR(n^{(1)},apbm)$, and by Theorem \ref{MCRnpq}, $n\equiv 0 \mod 2$ or $anpbm \equiv 1 \mod 2$.
\end{proof}

In the two afore-mentioned theorems, we study the existence of distance magic labelings for cronecker product involving regular complete multipartite graphs. In the following theorems, we shall consider complete bipartite graphs that might not be regular. 

\begin{theorem} \label{Kmn}
Let $p,t \geq 1$ and $m, n$ be two even integers, where $m \leq n$. Suppose that $G$ is a regular graph with order $p$.
If the conditions
\begin{description}
\item[(i)] $(m+n) \equiv 0 \mod 4$ and
\item[(ii)] $1=2(2ptn+1)^2-(2ptm+2ptn+1)^2$ or $m\geq (\sqrt{2}-1)n+\frac{\sqrt{2}-1}{2pt}$
\end{description}
hold then $tK_{m,n} \otimes G$ is distance magic.
\end{theorem}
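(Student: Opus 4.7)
The key observation is that the hypotheses (i) and (ii) of the theorem are exactly the hypotheses of Theorem \ref{MCRmnp} with the third parameter taken to be $pt$. So the plan is to invoke Theorem \ref{MCRmnp} to obtain a magic column rectangle $MCR(m^{(1)},n^{(1)};pt)$ and then use it as a labeling matrix for $tK_{m,n}\otimes G$.

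More concretely, let $N$ be an $MCR(m^{(1)},n^{(1)};pt)$ of order $(m+n)\times pt$, whose common column sum (in both the top $m\times pt$ block and the bottom $n\times pt$ block) is the constant $\rho=\frac{(m+n)((m+n)pt+1)}{4}$. Partition the $pt$ columns of $N$ into $t$ consecutive groups of $p$ columns each; the $i$-th group $N^{(i)}$ is an $(m+n)\times p$ submatrix that I will use to label the $i$-th component of $tK_{m,n}$. Write $V(tK_{m,n})=A_1\cup B_1\cup\cdots\cup A_t\cup B_t$ where $|A_i|=m$, $|B_i|=n$, and $A_i$ is completely joined to $B_i$ in the $i$-th copy of $K_{m,n}$. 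Fix $V(G)=\{u_1,\ldots,u_p\}$ (with $G$ being $r$-regular) and index the $p$ columns of $N^{(i)}$ by $u_1,\ldots,u_p$; index the top $m$ rows of $N^{(i)}$ by the vertices of $A_i$ and the bottom $n$ rows by the vertices of $B_i$. Define $\alpha\colon V(tK_{m,n}\otimes G)\to\{1,\ldots,(m+n)pt\}$ by letting $\alpha((v,u_j))$ be the corresponding entry of $N^{(i)}$.

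To check that $\alpha$ is distance magic, take any vertex $(v,u_j)$ with $v\in A_i$ (the case $v\in B_i$ is symmetric). In $tK_{m,n}\otimes G$, the neighbors of $(v,u_j)$ are exactly the pairs $(v',u')$ with $v'\in B_i$ and $u'\sim u_j$ in $G$. Hence
\[
w((v,u_j))=\sum_{u'\sim u_j}\sum_{v'\in B_i}\alpha((v',u'))=\sum_{u'\sim u_j}\rho=r\rho,
\]
because the inner sum is precisely the column sum in the bottom block of $N^{(i)}$ at column $u'$, which equals $\rho$ by the MCR property. Thus every vertex has the same weight $r\rho$, so $\alpha$ is a distance magic labeling.

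There is no real obstacle beyond correctly identifying how to slice the $MCR$: the $p$-axis of each group must correspond to $V(G)$ so that the column-sum property of the MCR is what interacts with the cronecker adjacency, and the split $(m,n)$ of rows must correspond to the bipartition of each $K_{m,n}$-component so that the neighborhood of a vertex in one part is exactly one column-block of the MCR. Once these correspondences are set up, the computation is a single line.
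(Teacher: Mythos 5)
Your proposal is correct and follows essentially the same route as the paper: both invoke Theorem \ref{MCRmnp} to obtain an $MCR(m^{(1)},n^{(1)};pt)$ and use its column blocks, with columns indexed by $V(G)$ and rows split according to the bipartition of each $K_{m,n}$-component, as the labeling matrix. The only cosmetic differences are that you handle all $t\geq 1$ uniformly where the paper separates $t=1$ from $t\geq 2$ (reassembling the column groups into a stacked matrix $O$), and your magic constant $r\rho$ is the correct value, whereas the paper writes $(m-1)r\sigma$, which appears to be a slip; constancy is all that matters, so both arguments go through.
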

\begin{proof}
For $t=1$, let $V(K_{m,n})=\{v_1,v_2,\ldots,v_{m+n}\}$ and $V_1,V_2$ are the two partite sets of $V(G)$. Let $V(G)={u_1,u_2,\ldots,u_p}$. If both conditions (i) and (ii) hold, then by Theorem \ref{MCRmnp}, there exists $MCR(m^{(1)},n^{(1)};p)$, say $M$. If $M$ is partitioned into two blocks of matrices: one block of order $m\times p$ and another of order $n\times p$, then sum of entries in every column of those blocks is constant, say $\sigma$.

Let $\alpha$ be an $M$-matrix labeling on $K_{m,n} \otimes G$. We shall prove that $\alpha$ is distance magic.
For every $(v_i,u_j) \in V(K_{m,n}\otimes G)$, $w((v_i,u_j))=\sum_{u_{j'} \sim u_j,v_{i'}\sim v_i} \alpha((v_{i},u_{j'}))$.
Let $v_i\in V_a, a=1,2$, then
\[w((v_i,u_j))=\sum_{u_{j'}\sim u_j,v_{i'}\sim v_i} M_{i',j'}=\sum_{u_{j'}\sim u_j,v_{i'}\notin V_a} M_{i',j'}=(m-1)r\sigma\]
Thus, $\alpha$ is a distance magic labeling.

For $t\geq 2$, notice that $tK_{m,n}\otimes G\approx t(K_{m,n}\otimes G)$. If both conditions (i) and (ii) hold, then by Theorem \ref{MCRmnp}, there exists $MCR(m^{(1)},n^{(1)};pt)$, say $N$. Partition $N$ into $t$ blocks of matrices of order $(m+n)\times p$, that is $N=[B_1 \ B_2 \ldots B_t]$. We shall construct a matrix $O$ of order $(m+n)t\times p$ as $O=[B_1^t \ B_2^t \ldots B_t^t]^t$. Let $\beta$ be an $O$-matrix labeling on $tK_{m,n}\otimes G$. Since $tK_{m,n}\otimes G\approx t(K_{m,n}\otimes G)$, we could prove that $\beta$ is a distance magic labeling in a similar way as in the case of $t=1$.
\end{proof}

In general, it is still unknown whether the sufficient conditions in Theorem \ref{Kmn} are also necesarry. However they are indeed necesarry for the case of $G$ is a complete graph.

\begin{theorem}
Let $a,b,p \geq 1$ and $m, n$ be two even integers, where $m \leq n$.
$aK_{m,n} \otimes bK_p$ is distance magic if and only if
\begin{description}
\item[(i)] 	$(m+n)\equiv 0(mod 4)$ and
\item[(ii)] $1=2(2abpn+1)^2-(2abpm+2abpn+1)^2$ or $m\geq (\sqrt{2}-1)n+\frac{\sqrt{2}-1}{2abp}$.
\end{description}
\end{theorem}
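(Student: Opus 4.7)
The proof splits into sufficiency and necessity, mirroring the template used in the two preceding theorems. For sufficiency, note that $bK_p$ is a $(p-1)$-regular graph of order $bp$; applying the preceding Theorem~\ref{Kmn} on $tK_{m,n}\otimes G$ with $t=a$ and $G=bK_p$, the substitution $pt=abp$ turns its two hypotheses into exactly our conditions (i) and (ii), so $aK_{m,n}\otimes bK_p$ is distance magic.

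For necessity, the plan is to extract an $MCR(m^{(1)},n^{(1)};abp)$ from any distance magic labeling and then invoke Theorem~\ref{MCRmnp}. Using the isomorphism $aK_{m,n}\otimes bK_p\cong ab(K_{m,n}\otimes K_p)$, view the labeling as an $M$-matrix labeling with $M$ of order $(m+n)a\times pb$, and partition $M$ into $ab$ sub-blocks of order $(m+n)\times p$, one per component. Split each sub-block $B$ row-wise into a $V_1$-part of order $m\times p$ and a $V_2$-part of order $n\times p$, corresponding to the two partite classes of $K_{m,n}$.

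Inside a single sub-block $B$, the cronecker adjacency gives $w((v_i,u_j))=R_{\bar c}-C_{j}^{\bar c}$ whenever $v_i\in V_c$, where $R_{\bar c}$ is the total of all $V_{\bar c}$-rows of $B$ and $C_{j}^{\bar c}$ is the column-$j$ sum restricted to the $V_{\bar c}$-rows. Constancy of $w$ in $j$ forces $C_{j}^{\bar c}$ to be independent of $j$; coupled with $R_{\bar c}=p\,C^{\bar c}$, this yields $(p-1)\,C^{\bar c}=k$, so both restricted column sums equal $\sigma:=k/(p-1)$. Since the magic constant $k$ is global, $\sigma$ takes the same value in every sub-block and in both partite blocks.

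Finally, concatenate the $V_1$-parts of all $ab$ sub-blocks horizontally to obtain an $m\times abp$ matrix with constant column sum $\sigma$, and similarly the $V_2$-parts to obtain an $n\times abp$ matrix with constant column sum $\sigma$; stacking them gives an $(m+n)\times abp$ matrix whose entries form a bijection with $\{1,\ldots,(m+n)abp\}$---an $MCR(m^{(1)},n^{(1)};abp)$. Theorem~\ref{MCRmnp} with parameter $abp$ in place of $p$ then returns precisely conditions (i) and (ii). The main obstacle I anticipate is establishing the uniformity of $\sigma$ across distinct components: this requires the globality of $k$ together with the identity $(p-1)\sigma=k$, and once secured, the remaining work is essentially a relabeling of entries.
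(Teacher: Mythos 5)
Your proposal is correct and follows essentially the same route as the paper: sufficiency by specialising Theorem~\ref{Kmn} to $G=bK_p$, and necessity by extracting an $MCR(m^{(1)},n^{(1)};abp)$ from the blocks of the $M$-matrix labeling and invoking Theorem~\ref{MCRmnp}. The paper delegates the constant-column-sum step to ``the method of Theorem~\ref{HnpKm}''; your direct computation $w((v_i,u_j))=R_{\bar c}-C_j^{\bar c}$ with $\sigma={\sf k}/(p-1)$ is a cleaner way to obtain the same fact and, usefully, makes explicit why the constant agrees across all $ab$ components.
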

\begin{proof} If conditions (i) and (ii) hold, by Theorem \ref{Kmn}, $aK_{m,n}\otimes bK_p $ is distance magic.

Now, if $aK_{m,n}\otimes bK_p$ is distance magic, then there exists a distance magic labeling $\alpha$, which is an $M$-matrix labeling, for a matrix $M$. Partition $M$ into $ab$ blocks of order $(m+n)\times p$, that is $B_1,B_2,\ldots,B_{ab}$, so that each block corresponds with one component of $aK_{m,n} \otimes bK_p$.

By using similar method as in the proof of Theorem \ref{HnpKm}, we obtain that $B_i$, $1\leq i\leq ab$, can be partitioned into a block of order $m\times p$ and a block of order $n\times p$ such that column sums of those blocks are a constant $\rho$. Consequently, the matrix $N=[B_1 \ B_2 \ldots B_{ab}]$ can be partitioned into a block of order $m\times abp$ and a block of order $n\times abp$ such that column sums of those blocks are also $\rho$. This means that an $MCR(m^{(1)},n^{(1)};abp)$ exists and, by Theorem \ref{MCRmnp}, the conditions (i) and (ii) must hold.
\end{proof}

\section*{Acknowledgments}

The first author was supported by Program Riset KK A 2017, funded by Institut Teknologi Bandung.

\section*{References}


\begin{thebibliography}{99}

\bibitem{AFK}
S. Arumugam, D. Froncek, and N. Kamatchi,
Distance Magic Graphs - A Survey,
\emph{J. Indones. Math. Soc.}
\textbf{Special Edition} (2011) 11-26.

\bibitem{AKV} S. Arumugam, N. Kamatchi, and G.R. Vijayakumar,
On The Uniqueness of $D$-Vertex Magic Constant,
\emph{Discussiones Mathematicae Graph Theory} \textbf{34} (2014) 1-8.

\bibitem{ARSP} B.D. Acharya, S.B. Rao, T. Singh and V. Parameswaran,
Neighborhood magic graphs,
{\em Proc. Nat. Conf. Graph Theory Combin. Algorithm} (2004).

\bibitem{Fr1} D. Froncek,
Handicap distance antimagic graphs and incomplete tournaments,
\emph{AKCE Int. J. Graphs Comb.} \textbf{10} (2013), 119–127.

\bibitem{Fr2} D. Froncek,
Incomplete tournaments and handicap distance antimagic graphs,
\emph{Congr. Numer.} \textbf{217} (2013), 93–99.

\bibitem{Fr3} D. Froncek,
Magic rectangle sets of odd order,
\emph{Aust. J. Combin.} \textbf{67} (2017) 345-351.

\bibitem{FKK} D. Froncek, P. Kovar and T. Kovarova,
Fair incomplete tournaments,
{\em Bull. Inst. Combin. App.} {\bf 48} (2006) 31-33.

\bibitem{Ji} M.I. Jinnah,
On $\Sigma$-labelled graphs,
{\em Technical Proceedings of Group Discussion on Graph Labeling Problems}, (1999) 71-77.

\bibitem{MRS} M. Miller, C. Rodger and R. Simanjuntak,
Distance magic labelings of graphs,
\emph{Aust. J. Combin.} \textbf{28} (2003) 305-315.

\bibitem{OS1} A. O'Neal and P. Slater,
An introduction to distance $D$-magic graphs,
\emph{J. Indonesian Math. Soc.} \textbf{Special Edition} (2011) 89-107.

\bibitem{OS} A. O'Neal and P. Slater,
Uniqueness Of Vertex Magic Constants,
\emph{SIAM J. Disc. Math.} \textbf{27} (2013) 708–716

\bibitem{Ra} S.B. Rao, Sigma Graphs - A survey,
\emph{Labelings of Discrete Structures and Applications} (2008) 135-140.

\bibitem{RSP} S.B. Rao, T. Singh and V. Parameswaran,
Some sigma labelled graphs I,
{\em Graphs, Combinatorics, Algorithms and Applications} (2004) 125-133.

\bibitem{Ru} R. Rupnow,
\emph{A survey of distance magic graphs},
Master Thesis, Michigan Technological University (2014), http://digitalcommons.mtu.edu/etds/829

\bibitem{SMA} M. A. Seoud, A. E. I. Abdel Maqsoud and Y. I. Aldiban,
New Classes of Graphs With and Without 1-Vertex Magic Vertex Labeling,
\emph{Proc. Pakistan Acad. Sci.} \textbf{46} (2009) 159-174.

\bibitem{SAS} M.K. Shafiq, G. Ali and R. Simanjuntak,
Distance magic labelings of a union of graphs,
\emph{AKCE J. Graphs. Combin.} \textbf{6} (2009) 191-200.

\bibitem{Vi} V. Vilfred,
{\em Sigma labelled graphs and circulant graphs},
Ph.D. Thesis, University of Kerala, India (1994).


\end{thebibliography}
\end{document}